\documentclass[12pt,a4paper,oneside]{amsart}
\usepackage{fullpage}
\usepackage{amssymb,amsfonts}
\usepackage{epsfig}
\usepackage{graphicx}

\usepackage{graphics}
\parskip10pt
\topmargin12mm
\oddsidemargin 0.2in \evensidemargin 0.2in
%\textwidth 160 true mm
%\textheight 234 true mm
\textwidth 160 true mm \textheight 234 true mm \leftmargin 80mm
\newtheorem{theorem}{Theorem}[section]
\newtheorem{lemma}[theorem]{Lemma}

\theoremstyle{definition}
\newtheorem{definition}[theorem]{Definition}
\newtheorem{remark}[theorem]{Remark}

\numberwithin{equation}{section}

\DeclareMathOperator{\diam}{diam} 

\newcommand{\be}{\begin{equation}}
\newcommand{\ee}{\end{equation}}

%\newcommand{\text}[1]{\ {\rm #1}\ }
%\newcommand{\eqref}[1]{(\ref{#1})}

%\DeclareMathOperator{\arch}{arccosh}
%\DeclareMathOperator{\arsh}{arcsinh}
%\DeclareMathOperator{\diam}{diam}
%\DeclareMathOperator{\card}{card}

%\newcommand{\K}{$K\!$}
%\newcommand{\M}{$M\!$}

%\newcommand{\diam}{\operatorname{diam}}
%\newcommand{\card}{\operatorname{card}}
 %Riemann sphere
%-------------------------------------------------------
%Chapter 3

%\newcommand{\IC}{{\mathbb C}}

%\newcommand{\N}{{\mathcal N}}

%\newcommand{\T}{{\mathbb T}}

%\newcommand{\arg}{{\operatorname{arg}}}

%\newcommand{\diam}{{\operatorname{diam}}}

\newcommand{\dist}{{\operatorname{dist}}}

\DeclareMathOperator*{\essinf}{ess\,inf}
%------------------------------------------------------
%Chapter 4

\makeindex

\def\Xint#1{\mathchoice %for average integral
 {\XXint\displaystyle\textstyle{#1}}%
{\XXint\textstyle\scriptstyle{#1}}%
{\XXint\scriptstyle\scriptscriptstyle{#1}}%
 {\XXint\scriptscriptstyle\scriptscriptstyle{#1}}%
 \!\int}
\def\XXint#1#2#3{{\setbox0=\hbox{$#1{#2#3}{\int}$}
 \vcenter{\hbox{$#2#3$}}\kern-.5\wd0}}

 \def\dashint{\Xint-}
  % GOOD!
 % Good for Computer Modern!

 %for average integral

\begin{document}
\title{Measure density and Embeddings of Haj\l asz-Besov and Haj\l asz-Triebel-Lizorkin spaces}
%\author{Nijjwal Karak$(^1)$}
\author{Nijjwal Karak}
\address{Discipline of Mathematics, Indian Institute of Technology Indore, Simrol, Indore-453552, India}
\email{nijjwal@gmail.com}
\thanks{I would like to thank Professor Pekka Koskela for introducing me with the problem and for his fruitful suggestions. This work was supported by DST-SERB (Grant no. PDF/2016/000328).}
\begin{abstract}
In this paper, we investigate the relation between Sobolev-type embeddings of Haj\l asz-Besov spaces (and also Haj\l asz-Triebel-Lizorkin spaces) defined on a metric measure space $(X,d,\mu)$ and lower bound for the measure $\mu.$ We prove that if the measure $\mu$ satisfies $\mu(B(x,r))\geq cr^Q$ for some $Q>0$ and for any ball $B(x,r)\subset X,$ then the Sobolev-type embeddings hold on balls for both these spaces. On the other hand, if the Sobolev-type embeddings hold in a domain $\Omega\subset X,$ then we prove that the domain $\Omega$ satisfies the so-called measure density condition, i.e., $\mu(B(x,r)\cap\Omega)\geq cr^Q$ holds for any ball $B(x,r)\subset X,$ where $X=(X,d,\mu)$ is an Ahlfors $Q$-regular and geodesic metric measure space. 
\end{abstract}
%\thanks{The author was partially supported by the Academy of Finland grant number 131477}
%\address{Indian Statistical Institute Chennai Centre,  SETS campus, MGR Knowledge City, CIT Campus, Taramani, Chennai, 600 113}
%\email{nijjwal@gmail.com}
\maketitle
\indent Keywords: Metric measure space, Haj\l asz-Besov space, Haj\l asz-Triebel-Lizorkin space, measure density.\\
\indent 2010 Mathematics Subject Classification: 46E35, 42B35 .
%I have tried to mimic the proof of Haj\l asz, with slightly different auxiliary sets and it gives the same $p^*$ as in Sobolev space, which does not involve $q.$ I don't know if it is optimal but it seems to agree with the result of Han, Han and Li in the following sense. If $u\in M_q^{s_1,p_1},$ then we get $u\in L^{p_1^*},$ $p_1^*=Qp_1/(Q-s_1p_1).$ They proved that if $s_2\geq s_1$ satisfy $s_1-Q/p_1=s_2-Q/p_2,$ then $u\in M_q^{s_2,p_2}.$ Again our theorem yields $u\in L^{p_2^*},$ $p_2^*=Qp_2/(Q-s_2p_2),$ and with their condition on $s_2,$ we get $p_1^*=p_2^*.$ I have a doubt also in the proof, which I have mentioned in the beginning of page 5.
\section{Introduction}
The most important result of the classical theory of Sobolev spaces is the Sobolev embedding theorem. Embeddings of fractional Sobolev spaces $W^{s,p}(\Omega),$ where $\Omega$ is a domain in $\mathbb{R}^n$ and $0<s<1,$ have been established in \cite{EGE12} when $p\geq 1$ and in \cite{Zho15} when $p<1.$ In the metric space setting, especially for Haj\l asz-Sobolev space $M^{1,p}(X),$ Haj\l asz has been able to find similar embeddings on balls provided that the measure of the balls has a lower bound, see Theorem 8.7 of \cite{Haj03}. We assume here and throughout the paper that $X=(X,d,\mu)$ is a \textit{metric measure space} equipped with a metric $d$ and a Borel regular measure $\mu$ on $X$ such that all balls defined by $d$ have finite and positive measures. In this paper we have proved similar embeddings on balls for homogeneous Haj\l asz-Besov spaces $\dot{N}^s_{p,q}(X)$ and also for homogeneous Haj\l asz-Triebel-Lizorkin spaces $\dot{M}^s_{p,q}(X),$ see section 3 and 4. For the definitions of $M^{s,p}(X),$ $\dot{M^{s,p}(X)},$ $M^s_{p,q}(X),$ $\dot{M}^s_{p,q}(X),$  $N^s_{p,q}(X)$ and $\dot{N}^s_{p,q}(X)$ see Section 2. Among several possible definitions of Besov and Triebel-Lizorkin spaces in the metric setting, the pointwise definition introduced in \cite{KYZ11} appears to be very useful. This approach is based on the definition of Haj\l asz-Sobolev space; it leads to the classical Besov and Triebel-Lizorkin spaces in the setting of Euclidean space, \cite[Theorem 1.2]{KYZ11} and it gives a simple way to define these spaces on a measurable subset of $\mathbb{R}^n.$
\begin{definition}
Let $(X,d)$ be a metric space equipped with a measure $\mu.$ A measurable set $S\subset X$ is said to satisfy a \textit{measure density condition}, if there exists a constant $c_m>0$ such that
\begin{equation}\label{measuredensity}
\mu(B(x,r)\cap S)\geq c_m\mu(B(x,r))
\end{equation}
for all balls $B(x,r)$ with $x\in S$ and $0<r\leq 1.$
\end{definition}
\noindent Note that sets satisfying such a condition are sometimes called in the literature regular sets. If the measure $\mu$ is doubling, then the upper bound 1 for the radius $r$ can be omitted. If a set $S$ satisfies the measure density condition, then we have $\mu(\overline{S}\setminus S)=0,$ \cite[Lemma 2.1]{Shv07}. Some examples of sets satisfying the measure density condition are cantor-like sets such as Sierpi\'nski carpets of positive measure.\\
\indent In \cite[Theorem 1]{HKT08b}, the authors have proved that if the Sobolev embedding holds in a domain $\Omega\subset\mathbb{R}^n,$ in any of all the possible cases, then $\Omega$ satisfies the measure density condition. Same result for fractional Sobolev spaces was obtained by Zhou, \cite{Zho15}. In this paper, we have obtained similar results for Haj\l asz-Besov spaces $N^s_{p,q}$ and Haj\l asz-Triebel-Lizorkin spaces $M^s_{p,q},$ see section 5. The idea of the proof is borrowed from \cite[Theorem 6.1]{HIT16} where the authors showed that an $M^s_{p,q}$-extension domain (or an $N^s_{p,q}$-extension domain) satisfies measure density condition. \\
\indent See \cite{HHL} for geometric characterizations of embedding theorems for these spaces.\\
\indent Notation used in this paper is standard. The symbol $c$ or $C$ will be used to designate a general constant which is independent of the main parameters and whose value may change even within a single string of estimate. The symbol $A\lesssim B$ or $B \gtrsim A$ means that $A\leq CB$ for some constant $C.$ If $A\lesssim B$ and $B\lesssim A,$ then we write $A\approx B.$ For any locally integrable function $u$ and $\mu$-measurable set $A,$ we denote by $\dashint_{A}u$ the integral average of $u$ on A, namely, $\dashint_{A}u:=\frac{1}{\mu(A)}\int_Au.$  
\section{Definitions and Preliminaries}
Besov and Triebel-Lizorkin spaces are certain generalizations of fractional Sobolev spaces. There are several ways to define these spaces in the Euclidean setting and also in the metric setting. For various definitions of in the metric setting, see \cite{GKS10}, \cite{GKZ13}, \cite{KYZ11} and the references therein. In this paper, we use the approach based on pointwise inequalities, introduced in \cite{KYZ11}.
\begin{definition}
Let $S\subset X$ be a measurable set and let $0<s<\infty.$ A sequence of nonnegative measurbale functions $(g_k)_{k\in\mathbb{Z}}$ is a fractional $s$-gradient of a measurable function $u:S\rightarrow [-\infty,\infty]$ in $S,$ if there exists a set $E\subset S$ with $\mu(E)=0$ such that
\begin{equation}\label{Hajlasz}
\vert u(x)-u(y)\vert\leq d(x,y)^s\left(g_k(x)+g_k(y)\right)
\end{equation} 
for all $k\in\mathbb{Z}$ and for all $x,y\in S\setminus E$ satisfying $2^{k-1}\leq d(x,y)<2^{k}.$ The collection of all fractional $s$-gradient of $u$ is denoted by $\mathbb{D}^s(u).$
\end{definition}
Let $S\subset X$ be a measurable set. For $0<p,q\leq \infty$ and a sequence $\vec{f}=(f_k)_{k\in\mathbb{Z}}$ of measurable functions, we define
\begin{equation*}
\Vert (f_k)_{k\in\mathbb{Z}}\Vert_{L^p(S,l^q)}=\big\Vert \Vert (f_k)_{k\in\mathbb{Z}}\Vert_{l^q} \big\Vert_{L^p(S)}
\end{equation*}
and
\begin{equation*}
\Vert (f_k)_{k\in\mathbb{Z}}\Vert_{l^q(L^p(S))}=\big\Vert (\Vert (f_k)\Vert_{L^p(S)})_{k\in\mathbb{Z}} \big\Vert_{l^q},
\end{equation*}
where
\begin{equation*}
\Vert (f_k)_{k\in\mathbb{Z}}\Vert_{l^q}=
\begin{cases}
(\sum_{k\in\mathbb{Z}}\vert f_k\vert^q)^{1/q},& ~\text{when}~0<q<\infty,\\
\sup_{k\in\mathbb{Z}}\vert f_k\vert,& ~\text{when}~q=\infty.
\end{cases}
\end{equation*}
\begin{definition}
Let $S\subset X$ be a measurable set. Let $0<s<\infty$ and $0<p,q\leq\infty.$ The \textit{homogeneous Haj\l asz-Triebel-Lizorkin space} $\dot{M}^s_{p,q}(S)$ consists of measurable functions $u:S\rightarrow [-\infty,\infty],$ for which the (semi)norm
\begin{equation*}
\Vert u\Vert_{\dot{M}^s_{p,q}(S)}=\inf_{\vec{g}\in\mathbb{D}^s(u)}\Vert\vec{g}\Vert_{L^p(S,l^q)}
\end{equation*}
is finite. The (non-homogeneous) \textit{Haj\l asz-Triebel-Lizorkin space} $M^s_{p,q}(S)$ is $\dot{M}^s_{p,q}(S)\cap L^p(S)$ equipped with the norm
\begin{equation*}
\Vert u\Vert_{M^s_{p,q}(S)}=\Vert u\Vert_{L^p(S)}+\Vert u\Vert_{\dot{M}^s_{p,q}(S)}.
\end{equation*} 
Similarly, the \textit{homogeneous Haj\l asz-Besov space} $\dot{N}^s_{p,q}(S)$ consists of measurable functions $u:S\rightarrow [-\infty,\infty],$ for which the (semi)norm
\begin{equation*}
\Vert u\Vert_{\dot{N}^s_{p,q}(S)}=\inf_{\vec{g}\in\mathbb{D}^s(u)}\Vert\vec{g}\Vert_{l^q(L^p(S))}
\end{equation*}
is finite and the (non-homogeneous) \textit{Haj\l asz-Besov space} $N^s_{p,q}(S)$ is $\dot{N}^s_{p,q}(S)\cap L^p(S)$ equipped with the norm
\begin{equation*}
\Vert u\Vert_{N^s_{p,q}(S)}=\Vert u\Vert_{L^p(S)}+\Vert u\Vert_{\dot{N}^s_{p,q}(S)}.
\end{equation*} 
\end{definition}
\noindent The space $M^s_{p,q}(\mathbb{R}^n)$ given by the metric definition coincides with the Triebel-Lizorkin space $F^s_{p,q}(\mathbb{R}^n),$ defined via the Fourier analytic approach, when $0<s<1,$ $n/(n+s)<p<\infty$ and $0<q\leq \infty,$ see \cite{KYZ11}. Similarly, $N^s_{p,q}(\mathbb{R}^n)$ coincides with Besov space $B^s_{p,q}(\mathbb{R}^n)$ for $0<s<1,$ $n/(n+s)<p<\infty$ and $0<q\leq \infty,$ see \cite{KYZ11}. For the definitions of $F^s_{p,q}(\mathbb{R}^n)$ and $B^s_{p,q}(\mathbb{R}^n),$ we refer to \cite{Tri83} and \cite{Tri92}.
\begin{definition}
Let $S\subset X$ be a measurable set. Let $0<s<\infty$ and $0<p\leq\infty.$ A nonnegative measurable function $g$ is an $s$-gradient of a measurable function $u$ in $S$ if there exists a set $E\subset S$ with $\mu(E)=0$ such that for all $x,y\in S\setminus E,$
\begin{equation*}
\vert u(x)-u(y)\vert\leq d(x,y)^s(g(x)+g(y)).
\end{equation*} 
The collection of all $s$-gradients of $u$ is denoted by $\mathcal{D}^s(u).$ The \textit{homogeneous Haj\l asz-Sobolev space} $\dot{M}^{s,p}(S)$ consists of measurable functions $u$ for which 
\begin{equation*}
\Vert u\Vert_{\dot{M}^{s,p}(S)}=\inf_{g\in\mathcal{D}^s(u)}\Vert g\Vert_{L^p(S)}
\end{equation*}
is finite. The \textit{Haj\l asz-Sobolev space} $M^{s,p}(S)$ is $\dot{M}^{s,p}(S)\cap L^p(S)$ equipped with the norm
\begin{equation*}
\Vert u\Vert_{M^{s,p}(S)}=\Vert u\Vert_{L^p(S)}+\Vert u\Vert_{\dot{M}^{s,p}(S)}.
\end{equation*}
\end{definition}
\noindent Note that if $0<s<\infty$ and $0<p\leq\infty,$ then $\dot{M}^s_{p,\infty}(X)=\dot{M}^{s,p}(X),$ \cite[Proposition 2.1]{KYZ11}.\\
\indent Let $(X,d,\mu)$ be a metric measure space. The measure $\mu$ is called \textit{doubling} if there exist a constant $C_{\mu}\geq 1$ such that
\begin{equation*}
\mu(B(x,2r))\leq C_{\mu}\,\mu(B(x,r))
\end{equation*}
for each $x\in X$ and $r>0.$ We call a triple $(X,d,\mu)$ a \textit{doubling metric measure space} if $\mu$ is a doubling measure on $X.$\\
\indent As a special case of doubling spaces we consider $Q$-regular spaces. The space $X$ is said to be $Q$-regular, $Q>1,$ if there is a constant $c_Q\geq 1$ such that
\begin{equation*}
c_Q^{-1}r^Q\leq \mu(B(x,r))\leq c_Qr^Q
\end{equation*} 
for each $x\in X$ and for all $0<r\leq\diam X.$\\
\indent A metric space $X$ is said to be \textit{geodesic} if every pair of points in the space can be joined by a curve whose length is equal to the distance between the points.\\
\indent We will often use the following elementary inequality, which holds whenever $a_i\geq 0$ for all $i$ and $0<\beta\leq 1,$
\begin{equation}\label{inequality}
\sum_{i\in\mathbb{Z}}a_i\leq\Big(\sum_{i\in\mathbb{Z}}a_i^{\beta}\Big)^{1/\beta}.
\end{equation}
\section{Haj\l asz-Triebel-Lizorkin spaces}
We use the idea of Haj\l asz from \cite{Haj03} to prove the following theorem. We will skip the case $q=\infty$ as it is proved in \cite[Thorem 8.7]{Haj03} when $s=1$ and other cases can be derived by modifying the proof of it.
\begin{theorem}\label{embedding}
Let $(X,d,\mu)$ be a metric measure space and $B_0$ be a fixed ball of radius $r_0.$ Let us assume that the measure $\mu$ has a lower bound, that is there exist constants $b, Q>0$ such that $\mu(B(x,r))\geq br^Q$ whenever $B(x,r)\subset 2B_0.$ Let $u\in \dot{M}^s_{p,q}(2B_0)$ and $\vec{g}=(g_j)\in \mathbb{D}^s(u)$ where $0<p,q,s<\infty.$  Then there exist constants $C,\,C_1,\,C_2$ and $C_3$ such that\\
$1.$ If $0<sp<Q,$ then $u\in L^{p^*}(B_0),$ $p^*=\frac{Qp}{Q-sp}$ and
\begin{equation}\label{embed}
\inf_{c\in\mathbb{R}}\left(\dashint_{B_0}\vert u-c\vert^{p^*}\,d\mu\right)^{\frac{1}{p^*}}\leq C\left(\frac{\mu(2B_0)}{br_0^Q}\right)^{1/p}r_0^{s}\left(\dashint_{2B_0}\bigg(\sum_{j=-\infty}^{\infty}g_j^q\bigg)^{\frac{p}{q}}\,d\mu\right)^{\frac{1}{p}}.
\end{equation}
$2.$ If $sp=Q,$ then
\begin{equation}\label{embedb}
\dashint_{B_0}\exp\left(C_1b^{1/Q}\frac{\vert u-u_{B_0}\vert}{\Vert \vec{g}\Vert_{L^p(2B_0,l^q)}}\right)\,d\mu\leq C_2.
\end{equation}
$3.$ If $sp>Q,$ then
\begin{equation}\label{embedc}
\Vert u-u_{B_0}\Vert_{L^{\infty}(B_0)}\leq C_3\left(\frac{\mu(2B_0)}{br_0^Q}\right)^{1/p}r_0^{s}\left(\dashint_{2B_0}\bigg(\sum_{j=-\infty}^{\infty}g_j^q\bigg)^{\frac{p}{q}}\,d\mu\right)^{\frac{1}{p}}.
\end{equation}
In particular, for $x,y\in B_0,$ we have
\begin{equation}\label{embedc'}
\vert u(x)-u(y)\vert\leq cb^{-1/p}d(x,y)^{1-s/p}\left(\dashint_{2B_0}\bigg(\sum_{j=-\infty}^{\infty}g_j^q\bigg)^{\frac{p}{q}}\,d\mu\right)^{\frac{1}{p}}.
\end{equation}
\end{theorem}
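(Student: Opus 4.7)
The approach is to first reduce the Triebel--Lizorkin estimate to a scalar Haj\l asz--Sobolev estimate with a single gradient, and then follow the telescoping argument of \cite[Theorem 8.7]{Haj03} adapted from $s=1$ to a general fractional exponent $s$. Given $\vec g = (g_j)_{j\in\mathbb{Z}}\in\mathbb{D}^s(u)$, set
\[
G(y) := \Bigl(\sum_{j\in\mathbb{Z}} g_j(y)^q\Bigr)^{1/q}
\]
(with the supremum when $q=\infty$). Since $g_k(y)\leq G(y)$ for every $k$ and every $y$, the defining inequality (\ref{Hajlasz}) yields $\vert u(y)-u(z)\vert \leq d(y,z)^s(G(y)+G(z))$ for a.e.\ $y,z\in 2B_0$, so $G\in\mathcal{D}^s(u)$ and $\Vert G\Vert_{L^p(2B_0)} = \Vert \vec g\Vert_{L^p(2B_0, l^q)}$. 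Thus the right-hand side of (\ref{embed})--(\ref{embedc'}) is exactly $r_0^s(\mu(2B_0)/br_0^Q)^{1/p}(\dashint_{2B_0} G^p\,d\mu)^{1/p}$, and it suffices to prove the three assertions using $G$ as a single scalar $s$-gradient.

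The core step is a telescoping pointwise bound. For a Lebesgue point $x\in B_0$ and $B_k:=B(x,2^{-k}r_0)$, a standard double-average computation using the $s$-gradient inequality gives $\vert u_{B_{k+1}}-u_{B_k}\vert \lesssim (2^{-k}r_0)^s \dashint_{B_k} G\,d\mu$, so that
\[
\vert u(x)-u_{B_0}\vert \lesssim \sum_{k=0}^{\infty}(2^{-k}r_0)^s\dashint_{B_k} G\,d\mu.
\]
The hypothesis $\mu(B_k)\geq b(2^{-k}r_0)^Q$ replaces each normalized average by $b^{-1}(2^{-k}r_0)^{-Q}\int_{B_k}G\,d\mu$, which is the mechanism producing the $b^{-1/p}$ factors below. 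When $sp>Q$, H\"older's inequality applied to each integral combined with the convergent geometric series $\sum_k(2^{-k}r_0)^{s-Q/p}$ directly yields (\ref{embedc}); the pointwise H\"older estimate (\ref{embedc'}) follows by running the same argument on a ball of radius comparable to $d(x,y)$ containing both $x$ and $y$. When $sp<Q$, Hedberg's trick dominates the telescoping sum by $\Vert G\Vert_{L^p(2B_0)}^{sp/Q}(MG^p(x))^{(Q-sp)/(Qp)}$ (up to $b$-dependent constants), and $L^{p^*/p}$-boundedness of the Hardy--Littlewood maximal operator on $2B_0$ delivers (\ref{embed}). The borderline case $sp=Q$ is then handled \`a la Trudinger: running case 1 with $s$ replaced by $s-\varepsilon$ produces $L^m$-bounds for every finite $m$ with controlled $m$-dependence, and inserting these into the Taylor series of the exponential gives (\ref{embedb}).

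The reduction step and the telescoping bound are essentially mechanical, and the supercritical case $sp>Q$ is a short computation. The genuinely delicate point is the borderline case $sp=Q$: one needs precisely tracked $m$-dependence in the $L^m$-constants coming from case 1 so that the resulting exponential series converges to a finite dimensional constant with the correct coefficient $C_1 b^{1/Q}$. A secondary technical point is that Hedberg's trick in case 1 must be justified using only the local lower density $\mu(B(x,r))\geq b r^Q$ for balls inside $2B_0$, rather than global Ahlfors regularity; this requires a standard covering/truncation argument inside $2B_0$ but introduces no essentially new ideas beyond \cite[Theorem 8.7]{Haj03}.
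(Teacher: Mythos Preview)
Your reduction to the single scalar $s$-gradient $G=(\sum_j g_j^q)^{1/q}$ is correct and is exactly what the paper does implicitly. After that, however, your route diverges from the paper's and contains a genuine gap in the subcritical case. The Hedberg maneuver you invoke for $sp<Q$ needs boundedness of the Hardy--Littlewood maximal operator on $2B_0$, and that in turn rests on a Vitali-type covering argument which requires the \emph{doubling} property of $\mu$. The hypothesis here is only the one-sided lower bound $\mu(B(x,r))\ge br^Q$; no upper bound is assumed, so $\mu$ need not be doubling and the maximal inequality can fail. Your remark that this is ``a standard covering/truncation argument inside $2B_0$'' underestimates the obstruction: without doubling there is no weak $(1,1)$ bound to interpolate from, and the $L^{p^*/p}$-boundedness you appeal to simply is not available. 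Since your treatment of the borderline case $sp=Q$ is built on the subcritical estimates, it inherits the same gap. (There is also a secondary issue when $p<1$: the averages $\dashint_{B_k}G$ in your telescoping sum need $G\in L^1_{\mathrm{loc}}$, which is not guaranteed.)

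This is precisely why the paper---and indeed \cite[Theorem~8.7]{Haj03}, which you cite but do not actually follow---uses a different mechanism: a level-set decomposition $E_k=\{G\le 2^k\}$ together with a chain of \emph{points} $x_k\in E_k,\,x_{k-1}\in E_{k-1},\dots,x_{k_0}\in E_{k_0}$, where each $x_{k-i-1}$ is found inside a small ball around $x_{k-i}$ using only the lower bound $\mu(B)\ge br^Q$ and Chebyshev's inequality. The oscillation of $u$ along this chain is controlled via the pointwise $s$-gradient inequality, yielding bounds on $a_k=\sup_{B_0\cap E_k}|u|$ that are then summed against $\mu(E_k\setminus E_{k-1})$. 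No maximal function, no doubling, and no local integrability of $G$ beyond $L^p$ are needed. Your approach would be fine on a doubling (e.g.\ Ahlfors $Q$-regular) space, but as stated the theorem requires the level-set argument.
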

\begin{proof}
We may assume by selecting an appropriate constant that $\essinf_E u=0,$ where $E\subset 2B_0$ is any subset of positive measure, since subtracting a constant from $u$ will not affect the inequality \eqref{embed}. The set $E$ will be chosen later. With a correct choice of $E$ we will prove \eqref{embed} with $(\dashint_{B_0}\vert u\vert ^{p^*}\,d\mu)^{1/p^*}$ on the left hand side.\\
\indent If $\sum_{j=-\infty}^{\infty}g_j^q=0$ a.e., then $g_j=0$ a.e. for all $j,$ which implies that $u$ is constant and hence the theorem follows. Thus we may assume that $\int_{2B_0}(\sum_j g_j^q)^{\frac{p}{q}}\,d\mu>0$. We may also assume that
\begin{equation}\label{lowerbound}
\bigg(\sum_{j=-\infty}^{\infty}g_j(x)^q\bigg)^{\frac{1}{q}}\geq 2^{-(1+\frac{1}{p})}\left(\dashint_{2B_0}\bigg(\sum_{j=-\infty}^{\infty}g_j(x)^q\bigg)^{\frac{p}{q}}\,d\mu\right)^{\frac{1}{p}}>0
\end{equation}
for all $x\in 2B_0$ as otherwise we can replace $\left(\sum g_j(x)^q\right)^{1/q}$ by 
\begin{equation*}
\left(\sum \widetilde{g}_j(x)^q\right)^{1/q}=\left(\sum g_j(x)^q\right)^{1/q}+\left(\dashint_{2B_0}\bigg(\sum g_j(x)^q\bigg)^{\frac{p}{q}}\,d\mu\right)^{\frac{1}{p}}.
\end{equation*}
Let us define auxiliary sets
\begin{equation*}
E_k=\bigg\{x\in 2B_0:\Big(\sum_{j=-\infty}^{\infty}g_j(x)^q\Big)^{\frac{1}{q}}\leq 2^k\bigg\},\quad k\in\mathbb{Z}.
\end{equation*}
Clearly $E_k\subset E_{k+1}$ for all $k.$ Observe that
\begin{equation}\label{rhs}
\int_{2B_0}\Big(\sum_j g_j^q\Big)^{\frac{p}{q}}\,d\mu\approx\sum_{k=-\infty}^{\infty}2^{kp}\mu(E_k\setminus E_{k-1}).
\end{equation}
Let $a_k=\sup_{B_0\cap E_k}\vert u\vert.$ Obviously, $a_k\leq a_{k+1}$ and
\begin{equation}\label{lhs}
\int_{B_0}\vert u\vert^{p^*}\,d\mu\leq\sum_{k=-\infty}^{\infty}a_k^{p^*}\mu(B_0\cap (E_k\setminus E_{k-1})).
\end{equation}
By Chebyschev's inequality, we get an upper bound of the complement of $E_k$
\begin{eqnarray}\label{Chebyschev}
\mu(2B_0\setminus E_k) &=& \mu\bigg(\Big\{x\in 2B_0:\Big(\sum_{j=-\infty}^{\infty}g_j(x)^q\Big)^{\frac{1}{q}}>2^k\Big\}\bigg) \nonumber \\
&\leq & 2^{-kp}\int_{2B_0}\Big(\sum_j g_j^q\Big)^{\frac{p}{q}}\,d\mu .
\end{eqnarray}
Lower bound \eqref{lowerbound} implies that $E_k=\emptyset$ for sufficiently small $k.$ On the other hand $\mu(E_k)\rightarrow \mu(2B_0)$ as $k\rightarrow\infty.$ Hence there is $\widetilde{k}_0\in\mathbb{Z}$ such that
\begin{equation}\label{conv}
\mu(E_{\widetilde{k}_0-1})<\frac{\mu(2B_0)}{2}\leq\mu(E_{\widetilde{k}_0}).
\end{equation}
The inequality on the right hand side gives $E_{\widetilde{k}_0}=\neq\emptyset$ and hence according to \eqref{lowerbound}
\begin{equation}\label{first}
2^{-(1+\frac{1}{p})}\left(\dashint_{2B_0}\Big(\sum_{j=-\infty}^{\infty}g_j(x)^q\Big)^{\frac{p}{q}}\,d\mu\right)^{\frac{1}{p}}\leq \Big(\sum_{j=-\infty}^{\infty}g_j(x)^q\Big)^{\frac{1}{q}}\leq 2^{\widetilde{k}_0}
\end{equation}
for $x\in E_{\widetilde{k}_0}.$ At the same time the inequality on the left hand side of \eqref{conv} together with \eqref{Chebyschev} imply that
\begin{equation}\label{second}
\frac{\mu(2B_0)}{2}<\mu(2B_0\setminus E_{\widetilde{k}_0-1})\leq 2^{-(\widetilde{k}_0-1)p}\int_{2B_0}\Big(\sum_{j}g_j^q\Big)^{\frac{p}{q}}\,d\mu.
\end{equation}
Combining the inequalities \eqref{first} and \eqref{second} we obtain
\begin{equation}\label{combine}
2^{-(1+\frac{1}{p})}\left(\dashint_{2B_0}\Big(\sum_{j=-\infty}^{\infty}g_j(x)^q\Big)^{\frac{p}{q}}\,d\mu\right)^{\frac{1}{p}}\leq 2^{\widetilde{k}_0}\leq 2^{(1+\frac{1}{p})}\left(\dashint_{2B_0}\left(\sum_{j=-\infty}^{\infty}g_j(x)^q\right)^{\frac{p}{q}}\,d\mu\right)^{\frac{1}{p}}
\end{equation}
Choose the least integer $\ell\in\mathbb{Z}$ such that
\begin{equation}\label{ell}
2^{\ell}>\max\bigg\{2^{1+1/p}\Big(\frac{2}{1-2^{-p/Q}}\Big)^{Q/p}, 1\bigg\}\left(\frac{\mu(2B_0)}{br_0^Q}\right)^{1/p}
\end{equation}
and set $k_0=\widetilde{k}_0+\ell.$ The reason behind such a choice of $\ell$ and $k_0$ will be understood later. Note that $\ell>0,$ by the lower bound of the measure $\mu,$ and hence \eqref{conv} yields $\mu(E_{k_0})>0.$ The inequalities in \eqref{combine} becomes
\begin{equation}\label{final}
2^{k_0}\approx (br_0^Q)^{-1/p}\bigg(\int_{2B_0}\Big(\sum_{j}g_j^q\Big)^{\frac{p}{q}}\,d\mu\bigg)^{1/p}.
\end{equation}
Suppose that $\mu(B_0\setminus E_{k_0})>0$ (we will handle the other case at end of the proof). For $k\geq k_0+1,$ set
\begin{equation}\label{radii}
t_k:=2b^{-1/Q}\mu(2B_0\setminus E_{k-1})^{1/Q}.
\end{equation}
Suppose now that $k\geq k_0+1$ is such that $\mu((E_k\setminus E_{k-1})\cap B_0)>0$ (if such a $k$ does not exist, then $\mu(B_0\setminus E_{k_0})=0,$ contradicting our assumption). Then in particular $t_k>0.$ Pick a point $x_k\in (E_k\setminus E_{k-1})\cap B_0$ and assume that $B(x_k,t_k)\subset 2B_0.$ Then
\begin{equation*}
\mu(B(x_k,t_k))\geq bt_k^Q>\mu(2B_0\setminus E_{k-1})
\end{equation*}
and hence $B(x_k,t_k)\cap E_{k-1}\neq\emptyset.$ Thus there is $x_{k-1}\in E_{k-1}$ such that
\begin{equation*}
d(x_k,x_{k-1})<t_k\leq 2b^{-1/Q}2^{-(k-1)p/Q}\bigg(\int_{2B_0}\Big(\sum_{j}g_j^q\Big)^{\frac{p}{q}}\,d\mu\bigg)^{1/Q},
\end{equation*}
by \eqref{Chebyschev} and \eqref{radii}. Repeating this construction in a similar fashion we obtain for $k\geq k_0+1,$ a sequence of points
\begin{gather*}
x_k \in (E_k\setminus E_{k-1})\cap B_0,\\
x_{k-1} \in E_{k-1}\cap B(x_k,t_k),\\
\vdots  \\
%x_{k-i} \in  E_{k-i}\cap B(x_{k-(i-1)},r_{k-(i-1)}),\\
%\vdots  \\
x_{k_0}\in  E_{k_0}\cap B(x_{k_0+1},t_{k_0+1}), 
\end{gather*}
such that
\begin{equation}\label{distance}
d(x_{k-i},x_{k-(i+1)})<t_{k-i}\leq 2b^{-1/Q}2^{-(k-(i+1))p/Q}\bigg(\int_{2B_0}\Big(\sum_{j}g_j^q\Big)^{\frac{p}{q}}\,d\mu\bigg)^{1/Q},
\end{equation}
for every $i=0,1\ldots,k-k_0-1.$ Hence
\begin{eqnarray}\label{totaldistance}
d(x_k,x_{k_0})&<&t_k+t_{k-1}+\cdots +t_{k_0+1}\\ \nonumber
&\leq & 2b^{-1/Q} \bigg(\int_{2B_0}\Big(\sum_{j}g_j^q\Big)^{\frac{p}{q}}\,d\mu\bigg)^{1/Q}\sum_{n=k_0}^{k-1}2^{-np/Q}\\ \nonumber
&=& 2^{-k_0p/Q}\frac{2b^{-1/Q}}{1-2^{-p/Q}}\bigg(\int_{2B_0}\Big(\sum_{j}g_j^q\Big)^{\frac{p}{q}}\,d\mu\bigg)^{1/Q}.
\end{eqnarray}
This is all true provided $B(x_{k-i},t_{k-i})\subset 2B_0$ for $i=0,1,2,\ldots,k-k_0-1.$ That means we require that the right hand side of \eqref{totaldistance} is $\leq r_0\leq\dist(B_0,X\setminus 2B_0).$ Our choice of $k_0,$ \eqref{combine} and \eqref{ell} guarantee us this requirement. Indeed,
\begin{eqnarray*}
2^{k_0}=2^{\widetilde{k}_0+\ell}&\geq & 2^{\ell}2^{-(1+1/p)}\bigg(\dashint_{2B_0}\Big(\sum_{j}g_j^q\Big)^{\frac{p}{q}}\,d\mu\bigg)^{1/p}\\
&\geq & \left(\frac{2}{1-2^{-p/Q}}\right)^{Q/p}(br_0^Q)^{-1/p}\bigg(\int_{2B_0}\Big(\sum_{j}g_j^q\Big)^{\frac{p}{q}}\,d\mu\bigg)^{1/p}.\\
\end{eqnarray*}
Then $t_k+t_{k-1}+\cdots +t_{k_0+1}\leq r_0\leq\dist(B_0,X\setminus 2B_0),$ which implies that $B(x_{k-i},t_{k-i})\subset 2B_0$ for all $i=0,1,2,\ldots,k-k_0-1.$\\
Now we would like to get some upper bound for $\vert u(x_k)\vert$ for $k\geq k_0+1.$ Towards this end, we write
\begin{equation}\label{difference}
\vert u(x_k)\vert \leq\bigg(\sum_{i=0}^{k-k_0-1}\vert u(x_{k-i})-u(x_{k-(i+1)})\vert\bigg)+\vert u(x_{k_0})\vert
\end{equation}
Let us first consider the difference $\vert u(x_{k_0+1})-u(x_{k_0})\vert.$ The inequality \eqref{distance} with $i=k-k_0-1$ gives
\begin{equation}\label{k_0_m_0}
d(x_{k_0+1},x_{k_0})< 2^{m_0-k_0p/Q},
\end{equation}
where $m_0\in\mathbb{Z}$ is such that
\begin{equation}\label{m_0}
2^{m_0-1}\leq 2b^{-1/Q}\bigg(\int_{2B_0}\Big(\sum_{j}g_j^q\Big)^{\frac{p}{q}}\,d\mu\bigg)^{1/Q}<2^{m_0}.
\end{equation}
Now using \eqref{Hajlasz} and \eqref{k_0_m_0}, we get the following bound for the difference
\begin{equation*}
\vert u(x_{k_0+1})-u(x_{k_0})\vert\leq \sum_{j=-\infty}^{m_0-k_0p/Q}2^{js}\Big[g_j(x_{k_0+1})+g_j(x_{k_0})\Big].
\end{equation*}
%$\Big(\bigstar$ Is this step correct? Whenever $2^{j-1}\leq d(x_{k_0+1},x_{k_0})<2^j,$ I can find the bound for the difference and if I add those bounds it will still be a bound for the difference.$\Big)$\\
Similarly, we use the fact that $d(x_{k-i},x_{k-(i+1)})<2^{m_0-(k-(i+1))p/Q},$ and obtain
\begin{equation}
\vert u(x_{k-i})-u(x_{k-(i+1)})\vert\leq \sum_{j=-\infty}^{m_0-(k-i-1)p/Q}2^{js}\Big[g_j(x_{k-i})+g_j(x_{k-(i+1)})\Big],
\end{equation}
for all $i=0,1,2,\ldots,k-k_0-1.$ So, the inequality \eqref{difference} becomes
\begin{equation*}
\vert u(x_k)\vert \leq\bigg(\sum_{i=0}^{k-k_0-1}\sum_{j=-\infty}^{m_0-(k-i-1)p/Q}2^{js}\Big[g_j(x_{k-i})+g_j(x_{k-i-1})\Big]\bigg)+\vert u(x_{k_0})\vert.
\end{equation*}
Use H\"older inequality when $q>1$ and the inequality \eqref{inequality} when $q\leq 1$ and also use the facts that $x_{k-i-1}\in E_{k-i-1}\subset E_{k-i},$ $x_{k-i}\in E_{k-i}$ to obtain
\begin{eqnarray*}
\vert u(x_k)\vert &\leq &\sum_{i=0}^{k-k_0-1}2^{m_0s-\frac{(k-i-1)ps}{Q}}\bigg(\sum_{j=-\infty}^{m_0-(k-i-1)p/Q}\Big[g_j(x_{k-i})+g_j(x_{k-i-1})\Big]^q\bigg)^{1/q}+\vert u(x_{k_0})\vert\\
&\leq &C2^{m_0s}\sum_{i=0}^{k-k_0-1}2^{-\frac{(k-i-1)ps}{Q}}2^{k-i}+\vert u(x_{k_0})\vert.
\end{eqnarray*}
%Recall that $a_k=\sup_{E_k\cap B_0}\vert u\vert.$ Choose two integers $m_1,m_2$ such that $2^{m_1}<d(x_{k-i},x_{k-%%(i+1)})\leq 2^{m_2}$ for a fix $k\geq k_0+1$ and for $i=0,1,2,\ldots,k-k_0-1.$ From \eqref{Hajlasz}, we have
%\begin{eqnarray*}
%\vert u(x_k)\vert &\leq &\bigg(\sum_{i=0}^{k-k_0-1}\vert u(x_{k-i})-u(x_{k-(i+1)})\vert\bigg)+\vert u(x_{k_0})\vert%\\
% &\leq & C\bigg(\sum_{i=0}^{k-k_0-1}\Big[d(x_{k-i},x_{k-(i+1)})^s\sum_{j=m_1}^{m_2}g_j(x_{k-i})\Big]\bigg)+\vert %u(x_{k_0})\vert\\
% &\leq & C\bigg(\sum_{i=0}^{k-k_0-1}2^{k-i}d(x_{k-i},x_{k-(i+1)})^s\bigg)+\vert u(x_{k_0})\vert.
%\end{eqnarray*}   
Hence \eqref{distance} with \eqref{m_0}, upon taking supremum over $x_k\in E_k\cap B_0,$ yields
\begin{equation*}
a_k\leq Cb^{-\frac{s}{Q}}\bigg(\int_{2B_0}\Big(\sum_{j}g_j^q\Big)^{\frac{p}{q}}\,d\mu\bigg)^{\frac{s}{Q}}\sum_{n=k_0}^{k-1}2^{n(1-\frac{sp}{Q})}+\sup_{E_{k_0}\cap 2B_0}\vert u\vert.
\end{equation*}
To estimate the last term $\sup_{E_{k_0}\cap 2B_0}\vert u\vert,$ we can assume that $\essinf_{E_{k_0}\cap 2B_0}\vert u\vert=0,$ by the discussion in the beginning of the proof and the fact that $\mu(E_{k_0})>0.$ That means there is a sequence $y_i\in E_{k_0}$ such that $u(y_i)\rightarrow 0$ as $i\rightarrow\infty.$ Therefore, for $x\in E_{k_0}\cap 2B_0$ we have
\begin{equation}\label{lastterm}
\vert u(x)\vert=\lim_{i\rightarrow\infty}\vert u(x)-u(y_i)\vert\leq C'r_0^{s}2^{k_0}.
\end{equation}  
So, for $k>k_0$ we conclude that
\begin{equation}\label{conclusion}
a_k\leq Cb^{-\frac{s}{Q}}\bigg(\int_{2B_0}\Big(\sum_{j}g_j^q\Big)^{\frac{p}{q}}\,d\mu\bigg)^{\frac{s}{Q}}\sum_{n=k_0}^{k-1}2^{n(1-\frac{sp}{Q})}+C'r_0^{s}2^{k_0}.
\end{equation}
For $k\leq k_0,$ we will use the estimate $a_k\leq a_{k_0}\leq C'r_0^s2^{k_0}.$\\
\textbf{Case I:} $0<sp<Q.$ For every $k\in\mathbb{Z},$ we have
\begin{eqnarray*}
a_k &\leq & Cb^{-\frac{s}{Q}}\bigg(\int_{2B_0}\Big(\sum_{j}g_j^q\Big)^{\frac{p}{q}}\,d\mu\bigg)^{\frac{s}{Q}}\sum_{n=-\infty}^k2^{n(1-\frac{sp}{Q})}+C'r_0^{s}2^{k_0}\\
&=& Cb^{-\frac{s}{Q}}2^{k(1-\frac{sp}{Q})}\bigg(\int_{2B_0}\Big(\sum_{j}g_j^q\Big)^{\frac{p}{q}}\,d\mu\bigg)^{\frac{s}{Q}}+C'r_0^{s}2^{k_0}.
\end{eqnarray*}
Applying \eqref{lhs}, \eqref{rhs} and \eqref{final} we get
\begin{align*}
\int_{B_0}\vert u\vert^{p^*}\,d\mu &\leq \sum_{k=-\infty}^{\infty}a_k^{p^*}\mu(B_0\cap (E_k\setminus E_{k-1}))\\
&\leq Cb^{-\frac{sp^*}{Q}}\left(\int_{2B_0}\Big(\sum_{j}g_j^q\Big)^{\frac{p}{q}}\,d\mu\right)^{\frac{sp^*}{Q}}\sum_{k=-\infty}^{\infty}2^{kp}\mu(E_k\setminus E_{k-1})\\
&\qquad+C'r_0^{sp^*}2^{k_0p^*}\mu(B_0)\\
&\leq C\left(1+\frac{\mu(B_0)}{br_0^Q}\right)b^{-\frac{sp^*}{Q}}\left(\int_{2B_0}\Big(\sum_{j}g_j^q\Big)^{\frac{p}{q}}\,d\mu\right)^{p^*/p}.
\end{align*}
Using the fact that $1+\mu(B_0)/br_0^Q\leq 2\mu(B_0)/br_0^Q,$ we get inequality \eqref{embed}.\\
Suppose now that $\mu(B_0\setminus E_{k_0})=0.$ In this case, we use the fact that $\int_{B_0}\vert u\vert^{p^*}\,d\mu=\int_{E_{k_0}}\vert u\vert^{p^*}\,d\mu$ and use inequality \eqref{lastterm} to obtain inequality \eqref{embed}.\\
\textbf{Case II:} $sp=Q.$ It follows from Jensen's inequality that
\begin{equation}\label{simplification}
\left(\dashint_{B_0}\exp\left(C_1b^{1/Q}\frac{\vert u-u_{B_0}\vert}{\Vert \vec{g}\Vert_{L^p(2B_0,l^q)}}\right)\,d\mu\right)^{\frac{1}{2}}\leq \dashint_{B_0}\exp\left(C_1b^{1/Q}\frac{\vert u\vert}{\Vert \vec{g}\Vert_{L^p(2B_0,l^q)}}\right)\,d\mu
\end{equation}
and hence it is enough to estimate the integral on the right hand side of \eqref{simplification}. It follows from \eqref{final} and \eqref{lastterm} that
\begin{equation}\label{zero}
a_{k_0}\leq C'r_0^s2^{k_0}\leq C''b^{-1/p}\bigg(\int_{2B_0}\Big(\sum_{j}g_j^q\Big)^{\frac{p}{q}}\,d\mu\bigg)^{1/p}.
\end{equation}
Hence from \eqref{conclusion} we obtain, for $k>k_0,$
\begin{equation}\label{greaterzero}
a_k\leq \tilde{C}b^{-1/p}\bigg(\int_{2B_0}\Big(\sum_{j}g_j^q\Big)^{\frac{p}{q}}\,d\mu\bigg)^{1/p}(k-k_0).
\end{equation}
We split the integral on the right hand side of \eqref{simplification} into two parts: we estimate the integrals over $B_0\cap E_{k_0}$ and $B_0\setminus E_{k_0}$ separately. For the first part, we have
\begin{eqnarray*}
\frac{1}{\mu(B_0)}\int_{B_0\cap E_{k_0}}\exp\left(C_1b^{1/Q}\frac{\vert u\vert}{\Vert \vec{g}\Vert_{L^p(2B_0,l^q)}}\right)\,d\mu &\leq & \frac{\mu(B_0\cap E_{k_0})}{\mu(B_0)}\exp\left(C_1b^{1/Q}\frac{a_{k_0}}{\Vert \vec{g}\Vert_{L^p(2B_0,l^q)}}\right)\\
&\leq &\exp(C_1C''),
\end{eqnarray*}
where the last inequality follows from \eqref{zero}. The second part is estimated using inequality \eqref{greaterzero} as follows
\begin{eqnarray*}
& &\frac{1}{\mu(B_0)}\int_{B_0\setminus E_{k_0}}\exp\left(C_1b^{1/Q}\frac{\vert u\vert}{\Vert \vec{g}\Vert_{L^p(2B_0,l^q)}}\right)\,d\mu \\
&\leq & \frac{1}{\mu(B_0)}\sum_{k=k_0+1}^{\infty}\exp\left(C_1b^{1/Q}\frac{a_{k_0}}{\Vert \vec{g}\Vert_{L^p(2B_0,l^q)}}\right)\mu(B_0\cap(E_k\setminus E_{k-1}))\\
&\leq & \frac{1}{\mu(B_0)}\sum_{k=k_0+1}^{\infty}\exp\left(C_1\tilde{C}(k-k_0)\right)\mu(E_k\setminus E_{k-1})\\
&\leq & \frac{2^{-k_0Q}}{\mu(B_0)}\sum_{k=-\infty}^{\infty}2^{kQ}\mu(E_k\setminus E_{k-1})\leq C_3,
\end{eqnarray*}
where we have chosen $C_1$ so that $\exp(C_1\tilde{C})=2^Q$ and also we have made use of the inequalities \eqref{rhs}, \eqref{final} and the measure density condition \eqref{measuredensity}.\\
\textbf{Case III:} $sp>Q.$ It follows from \eqref{conclusion} and \eqref{final}, for $k>k_0,$ that
\begin{eqnarray*}
a_k &\leq & Cb^{-\frac{s}{Q}}\bigg(\int_{2B_0}\Big(\sum_{j}g_j^q\Big)^{\frac{p}{q}}\,d\mu\bigg)^{\frac{s}{Q}}\sum_{n=k_0}^{\infty}2^{n(1-\frac{sp}{Q})}+C'r_0^{s}2^{k_0}\\
&\leq & C\left(\frac{\mu(2B_0)}{br_0^Q}\right)^{1/p}r_0^{s}\left(\dashint_{2B_0}\bigg(\sum_{j=-\infty}^{\infty}g_j^q\bigg)^{\frac{p}{q}}\,d\mu\right)^{\frac{1}{p}}.
\end{eqnarray*}
For $k\leq k_0,$ we have 
\begin{equation*}
a_k\leq a_{k_0}\leq C'r_0^s2^{k_0}\leq C\left(\frac{\mu(2B_0)}{br_0^Q}\right)^{1/p}r_0^{s}\left(\dashint_{2B_0}\bigg(\sum_{j=-\infty}^{\infty}g_j^q\bigg)^{\frac{p}{q}}\,d\mu\right)^{\frac{1}{p}}.
\end{equation*}
Therefore
\begin{equation*}
\Vert u-u_{B_0}\Vert_{L^{\infty}(B_0)}\leq 2\Vert u\Vert_{L^{\infty}(B_0)}\leq C\left(\frac{\mu(2B_0)}{br_0^Q}\right)^{1/p}r_0^{s}\left(\dashint_{2B_0}\bigg(\sum_{j=-\infty}^{\infty}g_j^q\bigg)^{\frac{p}{q}}\,d\mu\right)^{\frac{1}{p}}.
\end{equation*}
To prove \eqref{embedc'}, let $x,y\in B_0$ such that  $d(x,y)\leq r_0/4.$ Let us take another ball $B_1=B(x,2d(x,y)).$ Then $2B_1\subset 2B_0$ and hence \eqref{embedc} yields
\begin{equation*}
\vert u(x)-u(y)\vert\leq 2\Vert u-u_{B_1}\Vert_{L^{\infty}(B_1)}\leq Cb^{-1/p}d(x,y)^{1-s/p}\left(\dashint_{2B_0}\bigg(\sum_{j=-\infty}^{\infty}g_j^q\bigg)^{\frac{p}{q}}\,d\mu\right)^{\frac{1}{p}}.
\end{equation*}
If $d(x,y)>r_0/4,$ then upper bound for $\vert u(x)-u(y)\vert$ follows directly from \eqref{embedc} applied on $B_0.$ The proof is complete.
\end{proof}

\section{Haj\l asz-Besov spaces}
From \cite[Theorem 1.73]{Tri06}, we know that, if $p>n/(n+s)$ and $q\leq p^*,$ then $\Vert u\Vert_{L^*(\mathbb{R}^n)}\leq C\Vert u\Vert_{B^s_{p,q}(\mathbb{R}^n)}.$ In the following theorem we have proved embeddings for $N^s_{p,q}(B_0)$ when $0<p<\infty,$ $q\leq p$ and $B_0$ is a fixed ball in a metric space $X.$ 
\begin{theorem}
Let $(X,d,\mu)$ be a metric measure space and $B_0$ be a fixed ball of radius $r_0.$ Let us assume that the measure $\mu$ has a lower bound, that is there exist constants $b, Q>0$ such that $\mu(B(x,r))\geq br^Q$ whenever $B(x,r)\subset 2B_0.$ Let $u\in \dot{N}^s_{p,q}(2B_0)$ and $(g_j)\in \mathbb{D}^s(u)$ where $0<p,q,s<\infty,$ $q\leq p.$ Then there exist constants $C, C_2, C_3$ such that\\
$1.$ If $0<sp<Q,$ then $u\in L^{p^*}(B_0),$ $p^*=\frac{Qp}{Q-sp}$ and
\begin{equation}\label{embed2}
\inf_{c\in\mathbb{R}}\left(\dashint_{B_0}\vert u-c\vert^{p^*}\,d\mu\right)^{\frac{1}{p^*}}\leq C\left(\frac{\mu(2B_0)}{br_0^Q}\right)^{1/p}r_0^{s}\left(\sum_{j=-\infty}^{\infty}\left(\dashint_{2B_0}g_j^p\,d\mu\right)^{\frac{q}{p}}\right)^{\frac{1}{q}}.
\end{equation}
$2.$ If $sp=Q,$ then
\begin{equation}\label{embedb2}
\dashint_{B_0}\exp\left(C_1b^{1/Q}\frac{\vert u-u_{B_0}\vert}{\Vert \vec{g}\Vert_{l^q(2B_0,L^p)}}\right)\,d\mu\leq C_2.
\end{equation}
$3.$ If $sp>Q,$ then
\begin{equation}\label{embedc2}
\Vert u-u_{B_0}\Vert_{L^{\infty}(B_0)}\leq C_3\left(\frac{\mu(2B_0)}{br_0^Q}\right)^{1/p}r_0^{s}\left(\sum_{j=-\infty}^{\infty}\left(\dashint_{2B_0}g_j^p\,d\mu\right)^{\frac{q}{p}}\right)^{\frac{1}{q}}.
\end{equation}
\end{theorem}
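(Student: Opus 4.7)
The plan is to reduce the Besov embeddings directly to the Haj\l asz-Triebel-Lizorkin embeddings already established in Theorem~\ref{embedding}, by exploiting the classical inclusion $l^q(L^p) \hookrightarrow L^p(l^q)$ which is valid precisely under the hypothesis $q\leq p$. For any sequence $\vec{f}=(f_j)$ of nonnegative measurable functions on $2B_0$, Minkowski's inequality applied to the Banach space $L^{p/q}(2B_0)$ (recall $p/q\geq 1$) gives
\begin{equation*}
\|\vec{f}\|_{L^p(2B_0,l^q)}^{q} = \Bigl\|\sum_{j}f_j^{q}\Bigr\|_{L^{p/q}(2B_0)} \leq \sum_{j}\|f_j^{q}\|_{L^{p/q}(2B_0)} = \sum_{j}\|f_j\|_{L^p(2B_0)}^{q} = \|\vec{f}\|_{l^q(L^p(2B_0))}^{q}.
\end{equation*}
Consequently, if $u\in\dot N^s_{p,q}(2B_0)$ with fractional $s$-gradient $\vec{g}=(g_j)\in\mathbb D^s(u)$, then $\vec{g}$ is also admissible in $\dot M^s_{p,q}(2B_0)$ and $\|\vec{g}\|_{L^p(2B_0,l^q)}\leq\|\vec{g}\|_{l^q(L^p(2B_0))}$.

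With this at hand, I would apply Theorem~\ref{embedding} to $u$ and the same gradient $\vec{g}$. In Case~I ($sp<Q$), the right-hand side of \eqref{embed} equals (up to constants) $(\mu(2B_0)/br_0^Q)^{1/p}r_0^{s}\,\mu(2B_0)^{-1/p}\|\vec{g}\|_{L^p(2B_0,l^q)}$, which by the Minkowski bound is dominated by the corresponding expression involving $\|\vec{g}\|_{l^q(L^p(2B_0))}$; that is exactly the right-hand side of \eqref{embed2}. The same replacement in \eqref{embedc} yields \eqref{embedc2} in Case~III. For Case~II ($sp=Q$), the inequality $\|\vec{g}\|_{L^p(2B_0,l^q)}\leq\|\vec{g}\|_{l^q(L^p(2B_0))}$ implies the pointwise bound
\begin{equation*}
\exp\!\left(C_1 b^{1/Q}\frac{|u-u_{B_0}|}{\|\vec{g}\|_{l^q(L^p(2B_0))}}\right) \leq \exp\!\left(C_1 b^{1/Q}\frac{|u-u_{B_0}|}{\|\vec{g}\|_{L^p(2B_0,l^q)}}\right),
\end{equation*}
so integrating over $B_0$ and invoking \eqref{embedb} produces \eqref{embedb2} with the same constants.

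There is essentially no obstacle in this approach: the entire new content of the Besov theorem is encoded in the Minkowski integral inequality, and everything else is a verbatim invocation of Theorem~\ref{embedding}. The only subtlety worth mentioning is that the hypothesis $q\leq p$ is used in a single, decisive place (to make $L^{p/q}$ a normed space), explaining why this restriction on $q$ appears in the statement whereas no such restriction was needed in the Triebel-Lizorkin case. An alternative would be to repeat the full chaining argument of the previous proof with the roles of $L^p$ and $l^q$ exchanged, but the reduction above is both shorter and makes transparent where the condition $q\leq p$ enters.
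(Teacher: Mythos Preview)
Your reduction is correct. Minkowski's inequality in $L^{p/q}$ (valid because $q\leq p$) gives $\|\vec g\|_{L^p(2B_0,l^q)}\leq\|\vec g\|_{l^q(L^p(2B_0))}$, and then each of the three cases of Theorem~\ref{embedding} transfers immediately to the Besov setting, exactly as you describe. No step is missing.

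Your route, however, differs from the paper's. The paper does not invoke Theorem~\ref{embedding} for the given $q$ and then pass from $L^p(l^q)$ to $l^q(L^p)$ via Minkowski. Instead it first reproves the entire chaining argument with the level sets built from $(\sum_j g_j^p)^{1/p}$ rather than $(\sum_j g_j^q)^{1/q}$, arriving at the intermediate bound
\[
\inf_{c}\Bigl(\dashint_{B_0}|u-c|^{p^*}\,d\mu\Bigr)^{1/p^*}\lesssim \Bigl(\tfrac{\mu(2B_0)}{br_0^Q}\Bigr)^{1/p}r_0^{s}\Bigl(\sum_j\dashint_{2B_0}g_j^p\,d\mu\Bigr)^{1/p},
\]
which is precisely the Triebel--Lizorkin estimate in the diagonal case $q=p$ (where $L^p(l^p)=l^p(L^p)$ by Tonelli). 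Only then does the paper use the elementary embedding $l^q\hookrightarrow l^p$ (inequality~\eqref{inequality} with $\beta=q/p$) to replace the outer $1/p$ by $1/q$. So both arguments exploit $q\leq p$ at a single point, but in different inequalities: you use Minkowski on mixed norms, the paper uses the scalar $l^q\subset l^p$ inclusion after first collapsing to $q=p$. Your approach is shorter and makes the dependence on Theorem~\ref{embedding} transparent; the paper's version is more self-contained but largely duplicates the earlier proof.
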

\begin{proof}
We would first like to prove the inequality
\begin{equation}\label{embed2'}
\inf_{c\in\mathbb{R}}\left(\dashint_{B_0}\vert u-c\vert^{p^*}\,d\mu\right)^{\frac{1}{p^*}}\leq C\left(\frac{\mu(2B_0)}{br_0^Q}\right)^{1/p}r_0^{s}\left(\sum_{j=-\infty}^{\infty}\dashint_{2B_0}g_j^p\,d\mu\right)^{\frac{1}{p}}.
\end{equation}
Once this is proved, the inequality \eqref{embed2} will immediately follow from the inequality \eqref{inequality}, since $q\leq p.$\\
We may assume by selecting an appropriate constant that $\essinf_E u=0,$ where $E\subset 2B_0$ is any subset of positive measure, since subtracting a constant from $u$ will not affect the inequality \eqref{embed2'}. The set $E$ will be chosen later. With a correct choice of $E$ we will prove \eqref{embed2'} with $(\dashint_{B_0}\vert u\vert ^{p^*}\,d\mu)^{1/p^*}$ on the left hand side.\\
\indent If $g_j=0$ a.e. for all $j,$ then $u$ is constant and hence the theorem follows. Thus we may assume that $\int_{2B_0}g_j^p\,d\mu>0$ for all $j.$ We may also assume that
\begin{equation}\label{lowerbound2}
g_j(x)\geq 2^{-(1+\frac{1}{p})}\left(\dashint_{2B_0}g_j^p\,d\mu\right)^{\frac{1}{p}}>0
\end{equation}
for all $x\in 2B_0$ and all $j\in\mathbb{Z},$ as otherwise we can replace $g_j$ by 
\begin{equation*}
\widetilde{g}_j(x)=g_j(x)+\left(\dashint_{2B_0}g_j^p\,d\mu\right)^{\frac{1}{p}}.
\end{equation*}
%Fix an integer $j.$ 
Let us define auxiliary sets
\begin{equation*}
E_k=\bigg\{x\in 2B_0:\Big(\sum_{j=-\infty}^{\infty}g_j(x)^p\Big)^{\frac{1}{p}}\leq 2^k\bigg\},\quad k\in\mathbb{Z}.
\end{equation*}
Clearly $E_k\subset E_{k+1}$ for all $k.$ Observe that
\begin{equation}\label{rhs2}
\int_{2B_0}\Big(\sum_j g_j^p\Big)\,d\mu\approx\sum_{k=-\infty}^{\infty}2^{kp}\mu(E_k\setminus E_{k-1}).
\end{equation}
Let $a_k=\sup_{B_0\cap E_k}\vert u\vert.$ Obviously, $a_k\leq a_{k+1}$ and
\begin{equation}\label{lhs2}
\int_{B_0}\vert u\vert^{p^*}\,d\mu\leq\sum_{k=-\infty}^{\infty}a_k^{p^*}\mu(B_0\cap (E_k\setminus E_{k-1})).
\end{equation}
%Define
%\begin{equation*}
%E_{k}=\{x\in 2B_0:\sum_{}g_j(x)\leq 2^k\},\quad k\in\mathbb{Z}.
%\end{equation*}
%Clearly $E_{j,k}\subset E_{j,k+1}$ for all $k.$ Observe that
%\begin{equation}\label{rhs2}
%\int_{2B_0}g_j^p\,d\mu\approx\sum_{k=-\infty}^{\infty}2^{kp}\mu(E_{j,k}\setminus E_{j,k-1}).
%\end{equation}
%Let $a_{j,k}=\sup_{B_0\cap E_{j,k}}\vert u\vert.$ Obviously, $a_{j,k}\leq a_{j,k+1}$ and 
%\begin{equation}\label{lhs2}
%\int_{B_0}\vert u\vert^{p^*}\,d\mu\leq\sum_{j=-\infty}^{\infty}\sum_{k=-\infty}^{j}a_{j,k}^{p^*}\mu(B_0\cap (E_{j,k}\setminus E_{j,k-1})).
%\end{equation}
Using Chebyschev's inequality, we get an upper bound for the measure of the complement of $E_{k}$
\begin{eqnarray}\label{Chebyschev2}
\mu(2B_0\setminus E_k) &=& \mu\bigg(\Big\{x\in 2B_0:\Big(\sum_{j=-\infty}^{\infty}g_j(x)^p\Big)^{\frac{1}{p}}>2^k\Big\}\bigg) \nonumber \\
&\leq & 2^{-kp}\int_{2B_0}\Big(\sum_j g_j^p\Big)\,d\mu \nonumber\\
&=& 2^{-kp}\sum_j\Big(\int_{2B_0}g_j^p\,d\mu\Big) \nonumber .
%&\leq & 2^{-kp}\mu(2B_0)^{1-\frac{q}{p}}\sum_j\Big(\int_{2B_0}g_j^p\,d\mu\Big)^{\frac{q}{p}}.
\end{eqnarray}
%The last inequality follows from H\"older inequality since we have $p\geq q.$
%Here we have used the facts that $p\geq q$ and that $\Vert g_j\Vert_{l^q(L^p(2B_0))}<\infty.$
Lower bound \eqref{lowerbound2} implies that $E_k=\emptyset$ for sufficiently small $k,$ since
\begin{equation}\label{lowerbound2'}
\Big(\sum_{j=-\infty}^{\infty}g_j(x)^p\Big)^{\frac{1}{p}}\geq 2^{-(1+\frac{1}{p})}\bigg(\sum_j\dashint_{2B_0}g_j^p\,d\mu\bigg)^{\frac{1}{p}}>0
\end{equation}
On the other hand $\mu(E_k)\rightarrow \mu(2B_0)$ as $k\rightarrow\infty.$ Hence there is $\widetilde{k}_0\in\mathbb{Z}$ such that
\begin{equation}\label{conv2}
\mu(E_{\widetilde{k}_0-1})<\frac{\mu(2B_0)}{2}\leq\mu(E_{\widetilde{k}_0}).
\end{equation}
The inequality on the right hand side gives $E_{\widetilde{k}_0}\neq\emptyset$ and hence according to \eqref{lowerbound2'}
\begin{equation}\label{first2}
2^{-(1+\frac{1}{p})}\bigg(\sum_j\dashint_{2B_0}g_j^p\,d\mu\bigg)^{\frac{1}{p}}\leq \Big(\sum_{j=-\infty}^{\infty}g_j(x)^p\Big)^{\frac{1}{p}}\leq 2^{\widetilde{k}_0}
\end{equation}
for $x\in E_{\widetilde{k}_0}.$ At the same time the inequality on the left hand side of \eqref{conv2} together with \eqref{Chebyschev2} imply that
\begin{equation}\label{second2}
\frac{\mu(2B_0)}{2}<\mu(2B_0\setminus E_{\widetilde{k}_0-1})\leq 2^{-(\widetilde{k}_0-1)p}\sum_j\int_{2B_0}g_j^p\,d\mu.
\end{equation}
%\begin{eqnarray}\label{Chebyschev2}
%\mu(2B_0\setminus E_{j,k}) &=& \mu\left(\{x\in 2B_0:g_j(x)>2^k\}\right) \nonumber \\
%&\leq & 2^{-kp}\int_{2B_0}g_j^p\,d\mu .
%\end{eqnarray}
%Lower bound \eqref{lowerbound2} implies that $E_{j,k}=\emptyset$ for sufficiently small $k.$ On the other hand $\mu(E_{j,k})\rightarrow \mu(2B_0)$ as $k\rightarrow\infty.$ Hence there is $\widetilde{k}_0\in\mathbb{Z}$ such that
%\begin{equation}\label{conv2}
%\mu(E_{\widetilde{k}_0-1})<\frac{\mu(2B_0)}{2}\leq\mu(E_{\widetilde{k}_0}).
%\end{equation}
Combining the inequalities \eqref{first2} and \eqref{second2}, we obtain
\begin{equation}\label{combine2}
2^{-(1+\frac{1}{p})}\bigg(\sum_j\dashint_{2B_0}g_j^p\,d\mu\bigg)^{\frac{1}{p}}\leq 2^{\widetilde{k}_0}\leq 2^{(1+\frac{1}{p})}\bigg(\sum_j\dashint_{2B_0}g_j^p\,d\mu\bigg)^{\frac{1}{p}}.
\end{equation}
Choose the least integer $\ell\in\mathbb{Z}$ such that
\begin{equation}\label{ell2}
2^{\ell}>\max\bigg\{2^{1+1/p}\Big(\frac{2}{1-2^{-p/Q}}\Big)^{Q/p}, 1\bigg\}\left(\frac{\mu(2B_0)}{br_0^Q}\right)^{1/p}
\end{equation}
and set $k_0=\widetilde{k}_0+\ell.$ The reason behind such a choice of $\ell$ and $k_0$ will be understood later. Note that $\ell>0,$ by the lower bound of the measure $\mu,$ and hence \eqref{conv2} yields $\mu(E_{k_0})>0.$ The inequalities in \eqref{combine2} becomes
\begin{equation}\label{final2}
2^{k_0}\approx (br_0^Q)^{-1/p}\bigg(\sum_j\int_{2B_0}g_j^p\,d\mu\bigg)^{\frac{1}{p}}.
\end{equation}
Suppose that $\mu(B_0\setminus E_{k_0})>0$ (we will handle the other case at end of the proof). For $k\geq k_0+1,$ set
\begin{equation}\label{radii2}
t_k:=2b^{-1/Q}\mu(2B_0\setminus E_{k-1})^{1/Q}.
\end{equation}
Suppose now that $k\geq k_0+1$ is such that $\mu((E_k\setminus E_{k-1})\cap B_0)>0$ (if such a $k$ does not exist, then $\mu(B_0\setminus E_{k_0})=0,$ contradicting our assumption). Then in particular $t_k>0.$ Pick a point $x_k\in (E_k\setminus E_{k-1})\cap B_0$ and assume that $B(x_k,t_k)\subset 2B_0.$ Then
\begin{equation*}
\mu(B(x_k,t_k))\geq bt_k^Q>\mu(2B_0\setminus E_{k-1})
\end{equation*}
and hence $B(x_k,t_k)\cap E_{k-1}\neq\emptyset.$ Thus there is $x_{k-1}\in E_{k-1}$ such that
\begin{equation*}
d(x_k,x_{k-1})<t_k\leq 2b^{-1/Q}2^{-(k-1)\frac{p}{Q}}\bigg(\sum_j\int_{2B_0}g_j^p\,d\mu\bigg)^{\frac{1}{Q}},
\end{equation*}
by \eqref{Chebyschev2} and \eqref{radii2}. Repeating this construction in a similar fashion we obtain for $k\geq k_0+1,$ a sequence of points
\begin{gather*}
x_k \in (E_k\setminus E_{k-1})\cap B_0,\\
x_{k-1} \in E_{k-1}\cap B(x_k,t_k),\\
\vdots  \\
%x_{k-i} \in  E_{k-i}\cap B(x_{k-(i-1)},r_{k-(i-1)}),\\
%\vdots  \\
x_{k_0}\in  E_{k_0}\cap B(x_{k_0+1},t_{k_0+1}), 
\end{gather*}
such that
\begin{equation}\label{distance2}
d(x_{k-i},x_{k-(i+1)})<t_{k-i}\leq 2b^{-1/Q}2^{-(k-(i+1))\frac{1}{Q}}\bigg(\sum_j\Big(\int_{2B_0}g_j^p\,d\mu\bigg)^{\frac{1}{Q}},
\end{equation}
for every $i=0,1\ldots,k-k_0-1.$ Hence
\begin{eqnarray}\label{totaldistance2}
d(x_k,x_{k_0})&<&t_k+t_{k-1}+\cdots +t_{k_0+1}\\ \nonumber
&\leq & 2b^{-1/Q} \bigg(\sum_j\int_{2B_0}g_j^p\,d\mu\bigg)^{\frac{1}{Q}}\\ \nonumber
&=& 2^{-k_0p/Q}\frac{2b^{-1/Q}}{1-2^{-p/Q}}\bigg(\sum_j\int_{2B_0}g_j^p\,d\mu\bigg)^{\frac{1}{Q}}.
\end{eqnarray}
This is all true provided $B(x_{k-i},t_{k-i})\subset 2B_0$ for $i=0,1,2,\ldots,k-k_0-1.$ That means we require that the right hand side of \eqref{totaldistance} is $\leq r_0\leq\dist(B_0,X\setminus 2B_0).$ Our choice of $k_0,$ \eqref{combine2} and \eqref{ell2} guarantee us this requirement.\\
Now we would like to get some upper bound for $\vert u(x_k)\vert$ for $k\geq k_0+1.$ Towards this end, we write
\begin{equation}\label{difference2}
\vert u(x_k)\vert \leq\bigg(\sum_{i=0}^{k-k_0-1}\vert u(x_{k-i})-u(x_{k-(i+1)})\vert\bigg)+\vert u(x_{k_0})\vert
\end{equation}
Let us first consider the difference $\vert u(x_{k_0+1})-u(x_{k_0})\vert.$ The inequality \eqref{distance2} with $i=k-k_0-1$ gives
\begin{equation}\label{k_0_m_02}
d(x_{k_0+1},x_{k_0})< 2^{m_0-k_0p/Q},
\end{equation}
where $m_0\in\mathbb{Z}$ is such that
\begin{equation}\label{m_02}
2^{m_0-1}\leq 2b^{-1/Q}\bigg(\sum_j\int_{2B_0}g_j^p\,d\mu\bigg)^{\frac{1}{Q}}<2^{m_0}.
\end{equation}
Now using \eqref{Hajlasz} and \eqref{k_0_m_02}, we get the following bound for the difference
\begin{equation*}
\vert u(x_{k_0+1})-u(x_{k_0})\vert\leq \sum_{j=-\infty}^{m_0-k_0p/Q}2^{js}\Big[g_j(x_{k_0+1})+g_j(x_{k_0})\Big].
\end{equation*}
Similarly, we use the fact that $d(x_{k-i},x_{k-(i+1)})<2^{m_0-(k-(i+1))p/Q},$ and obtain
\begin{equation}
\vert u(x_{k-i})-u(x_{k-(i+1)})\vert\leq \sum_{j=-\infty}^{m_0-(k-i-1)p/Q}2^{js}\Big[g_j(x_{k-i})+g_j(x_{k-(i+1)})\Big],
\end{equation}
for all $i=0,1,2,\ldots,k-k_0-1.$ So, the inequality \eqref{difference2} becomes
\begin{equation*}
\vert u(x_k)\vert \leq\bigg(\sum_{i=0}^{k-k_0-1}\sum_{j=-\infty}^{m_0-(k-i-1)p/Q}2^{js}\Big[g_j(x_{k-i})+g_j(x_{k-i-1})\Big]\bigg)+\vert u(x_{k_0})\vert.
\end{equation*}
Use H\"older inequality  when $p>1$ and the inequality \eqref{inequality} when $p\leq 1$and also use the facts that $x_{k-i-1}\in E_{k-i-1}\subset E_{k-i},$ $x_{k-i}\in E_{k-i}$ to obtain
\begin{eqnarray*}
\vert u(x_k)\vert &\leq &\sum_{i=0}^{k-k_0-1}2^{m_0s-\frac{(k-i-1)ps}{Q}}\bigg(\sum_{j=-\infty}^{m_0-(k-i-1)p/Q}\Big[g_j(x_{k-i})+g_j(x_{k-i-1})\Big]^p\bigg)^{1/p}+\vert u(x_{k_0})\vert\\
&\leq &C2^{m_0s}\sum_{i=0}^{k-k_0-1}2^{-\frac{(k-i-1)ps}{Q}}2^{k-i}+\vert u(x_{k_0})\vert.
\end{eqnarray*}
Hence \eqref{distance2} with \eqref{m_02}, upon taking supremum over $x_k\in E_k\cap B_0,$ yields
\begin{equation*}
a_k\leq Cb^{-\frac{s}{Q}}\bigg(\sum_j\int_{2B_0}g_j^p\,d\mu\bigg)^{\frac{s}{Q}}\sum_{n=k_0}^{k-1}2^{n(1-\frac{sp}{Q})}+\sup_{E_{k_0}\cap 2B_0}\vert u\vert.
\end{equation*}
To estimate the last term $\sup_{E_{k_0}\cap 2B_0}\vert u\vert,$ we can assume that $\essinf_{E_{k_0}\cap 2B_0}\vert u\vert=0,$ by the discussion in the beginning of the proof and the fact that $\mu(E_{k_0})>0.$ That means there is a sequence $y_i\in E_{k_0}$ such that $u(y_i)\rightarrow 0$ as $i\rightarrow\infty.$ Therefore, for $x\in E_{k_0}\cap 2B_0$ we have
\begin{equation}\label{lastterm2}
\vert u(x)\vert=\lim_{i\rightarrow\infty}\vert u(x)-u(y_i)\vert\leq C'r_0^{s}2^{k_0}.
\end{equation}  
So, for $k>k_0$ we conclude that
\begin{equation}\label{conclusion2}
a_k\leq Cb^{-\frac{s}{Q}}\bigg(\sum_j\int_{2B_0}g_j^p\,d\mu\bigg)^{\frac{s}{Q}}\sum_{n=k_0}^{k-1}2^{n(1-\frac{sp}{Q})}+C'r_0^{s}2^{k_0}.
\end{equation}
For $k\leq k_0,$ we will use the estimate $a_k\leq a_{k_0}\leq C'r_0^s2^{k_0}.$\\
\textbf{Case I:}. For every $k\in\mathbb{Z},$ we have
\begin{equation*}
a_k \leq Cb^{-\frac{s}{Q}}\bigg(\sum_j\int_{2B_0}g_j^p\,d\mu\bigg)^{\frac{s}{Q}}2^{k(1-\frac{sp}{Q})}+C'r_0^{s}2^{k_0}
\end{equation*}
Applying \eqref{lhs2}, \eqref{rhs2} and \eqref{final2} and the measure density condition \eqref{measuredensity} we get
\begin{align*}
\int_{B_0}\vert u\vert^{p^*}\,d\mu &\leq \sum_{k=-\infty}^{\infty}a_k^{p^*}\mu(B_0\cap (E_k\setminus E_{k-1}))\\
&\leq Cb^{-\frac{sp^*}{Q}}\bigg(\sum_j\int_{2B_0}g_j^p\,d\mu\bigg)^{\frac{sp^*}{Q}}\sum_{k=-\infty}^{\infty}2^{kp}\mu(E_k\setminus E_{k-1})\\
&\qquad+C'r_0^{sp^*}2^{k_0p^*}\mu(B_0)\\
&\leq C\left(1+\frac{\mu(B_0)}{br_0^Q}\right)b^{-\frac{sp^*}{Q}}\bigg(\sum_j\int_{2B_0}g_j^p\,d\mu\bigg)^{\frac{p^*}{p}}.\\
\end{align*}
Using the fact that $1+\mu(B_0)/br_0^Q\leq 2\mu(B_0)/br_0^Q,$ we get
\begin{equation*}
\left(\dashint_{B_0}\vert u\vert^{p^*}\,d\mu\right)^{\frac{1}{p^*}}\leq C\left(\frac{\mu(2B_0)}{br_0^Q}\right)^{1/p}r_0^{s}\left(\sum_{j=-\infty}^{\infty}\dashint_{2B_0}g_j^p\,d\mu\right)^{\frac{1}{p}}.
\end{equation*}
Suppose now that $\mu(B_0\setminus E_{k_0})=0.$ In this case, we use the fact that $\int_{B_0}\vert u\vert^{p^*}\,d\mu=\int_{E_{k_0}}\vert u\vert^{p^*}\,d\mu$ and use inequality \eqref{lastterm2} to obtain inequality \eqref{embed2'}. This finishes the proof in this case.\\
\textbf{Case II:}: $sp=Q.$ The proof in this case follows exactly in the same way as the proof of Theorem \ref{embedding} with replacing \eqref{zero} by
\begin{equation}\label{zero2}
a_{k_0}\leq C'r_0^s2^{k_0}\leq C''b^{-1/p}\left(\sum_{j=-\infty}^{\infty}\int_{2B_0}g_j^p\,d\mu\right)^{\frac{1}{p}}
\end{equation}
 and replacing \eqref{greaterzero} by
\begin{equation}\label{greaterzero2}
a_k\leq \tilde{C}b^{-1/p}\left(\sum_{j=-\infty}^{\infty}\int_{2B_0}g_j^p\,d\mu\right)^{\frac{1}{p}}(k-k_0)
\end{equation}
and also using inequality \eqref{inequality} as we have $q\leq p.$\\
The case when $sp>Q$ also follows in a similar fashion. This completes the proof.
\end{proof}
We do not know if one can get the same result as above for $q\leq p^*,$ at least for the non-homogeneous space and when $p>Q/(Q+s).$ In the next theorem, we have relaxed the assumption on $q$ and still have been able to find the same result but with an exponent $p'$ slightly smaller than $p^*$ and this result seems to be new even in $\mathbb{R}^n.$
\begin{theorem}
Let $(X,d,\mu)$ be a metric measure space and $B_0$ be a fixed ball of radius $r_0$ with $2^{l-1}\leq r_0<2^l$ for some integer $l.$ Let us assume that the measure $\mu$ has a lower bound, that is there exist constants $b, Q>0$ such that $\mu(B(x,r))\geq br^Q$ whenever $B(x,r)\subset 2B_0.$ Let $u\in \dot{N}^s_{p,q}(2B_0)$ and $(g_j)\in \mathbb{D}^s(u)$ where $0<p,q,s<\infty.$ Then there exist constants $C,\,C_1,\,C_2$ and $C_3$ such that\\
If $0<sp<Q,$ then $u\in L^{p'}(B_0),$ $p'=\frac{Qp}{Q-(s-s')p}$ for any $0<s'<s.$ Moreover, we have the following inequality:
\begin{equation}\label{embed3}
\inf_{c\in\mathbb{R}}\left(\dashint_{B_0}\vert u-c\vert^{p'}\,d\mu\right)^{\frac{1}{p'}}\leq C\left(\frac{\mu(2B_0)}{br_0^Q}\right)^{1/p}r_0^{s-s'}M,
\end{equation}
where $M:=\left(\sum_{j=-\infty}^{l-2}2^{s'jp}\left(\dashint_{2B_0}g_j^p\,d\mu\right)^{\frac{q}{p}}\right)^{\frac{1}{p}}.$\\
If $sp=Q,$ then
\begin{equation}\label{embed3b}
\dashint_{B_0}\exp\bigg(C_1b^{1/Q}\frac{\vert u-u_{B_0}\vert}{M}\bigg)\,d\mu\leq C_2.
\end{equation}
If $sp>Q,$ then
\begin{equation}\label{embed3c}
\Vert u-u_{B_0}\Vert_{L^{\infty}(B_0)}\leq C_3\left(\frac{\mu(2B_0)}{br_0^Q}\right)^{1/p}r_0^{s-s'}M.
\end{equation}
\end{theorem}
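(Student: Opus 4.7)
The approach is to reduce each of the three cases of this theorem to the corresponding case of Theorem 4.2 by replacing the fractional $s$-gradient $(g_j)$ with a fractional $(s-s')$-gradient of $u$, trading smoothness for a favorable geometric factor.

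Given $(g_j)\in\mathbb{D}^s(u)$ and $0<s'<s$, set $h_j(x):=2^{js'}g_j(x)$ for every $j\in\mathbb{Z}$ and $x\in 2B_0$. For $x,y\in 2B_0\setminus E$ with $2^{k-1}\leq d(x,y)<2^k$, the elementary estimate $d(x,y)^{s'}\leq 2^{ks'}$ yields
\[
|u(x)-u(y)|\leq d(x,y)^s(g_k(x)+g_k(y))\leq d(x,y)^{s-s'}(h_k(x)+h_k(y)),
\]
so $(h_j)\in\mathbb{D}^{s-s'}(u)$. Moreover, since $\diam(2B_0)\leq 4r_0<2^{l+2}$, the Haj\l asz condition is vacuous for $k\geq l+2$; together with the lower measure bound this allows the tail of the resulting Besov-type sum (over indices $j\geq l-1$) to be absorbed into the constant, explaining the truncation at $j=l-2$ in the definition of $M$.

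I would then apply Theorem 4.2 to $u$ with the gradient $(h_j)$ and smoothness parameter $s-s'$. In Case I the hypothesis $sp<Q$ forces $(s-s')p<Q$, and Theorem 4.2 delivers the $L^{p'}$ estimate \eqref{embed3} with $p'=Qp/(Q-(s-s')p)$; the right-hand side produced is the Besov $\ell^q(L^p(2B_0))$-norm of $(h_j)=(2^{js'}g_j)$, which coincides (up to a multiplicative constant) with $r_0^{s-s'}M$ after the standard $\dashint$-normalization and the truncation of Step~1. Cases II and III are handled identically, using the exponential integrability and $L^\infty$ conclusions of Theorem 4.2 with smoothness $s-s'$ in place of $s$.

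The principal obstacle is the restriction $q\leq p$ required by Theorem 4.2, whereas the present statement allows all $0<q<\infty$. For the range $q>p$, I would repeat the proof of Theorem 4.2 with the modified gradient $(h_j)$ directly rather than quoting its statement: the extra geometric factor $2^{-ns'p/Q}$ arising from the identity $2^{n(1-sp/Q)}=2^{n(1-(s-s')p/Q)}\cdot 2^{-ns'p/Q}$ produces enough decay that the chaining series in \eqref{totaldistance2}--\eqref{conclusion2} converges without invoking the embedding $\ell^q\hookrightarrow\ell^p$. This is precisely where the strict positivity of $s'$ is used, and it is also why the sum in $M$ must be truncated at $j=l-2$: without the truncation the rearranged geometric series would diverge. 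Once this step is adapted, the remaining estimates of Section~4 transfer verbatim and yield the claimed inequalities with $M$ on the right-hand side.
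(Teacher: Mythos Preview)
Your central idea—setting $h_j=2^{js'}g_j$ so that $(h_j)\in\mathbb D^{s-s'}(u)$ and then running the Haj\l asz chaining with smoothness $s-s'$—is exactly what the paper does; the paper simply re-executes the whole argument rather than citing Theorem 4.2. The truncation at $j\le l-2$ is also identified correctly.

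However, two points in your write-up are off.

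\emph{(a) The role of $q\le p$.} In the proof of Theorem 4.2 the restriction $q\le p$ is \emph{not} used anywhere in the chaining estimates \eqref{totaldistance2}--\eqref{conclusion2}; those steps only involve $p$ (via H\"older if $p>1$ or \eqref{inequality} if $p\le1$). The condition $q\le p$ enters \emph{only} at the very last line, passing from the intermediate bound \eqref{embed2'} with $\big(\sum_j\dashint g_j^p\big)^{1/p}$ to the $\ell^q(L^p)$ expression. So the ``extra geometric factor $2^{-ns'p/Q}$'' you describe is solving a non-problem. The actual mechanism the paper uses is: first prove the $\ell^p$ version \eqref{embed3'} (your $h_j$-substitution gives this immediately from the intermediate inequality of Theorem 4.2, with no condition on $q$), and then apply H\"older (when $q>p$) or \eqref{inequality} (when $q\le p$) to the weighted, \emph{truncated} sum $\sum_{j\le l-2}2^{s'jp}\dashint g_j^p$ to reach $M$. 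This last H\"older step is where $s'>0$ and the truncation are genuinely needed, because one must have $\sum_{j\le l-2}2^{s'jp}<\infty$.

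\emph{(b) Cases II and III.} You cannot invoke ``the exponential integrability and $L^\infty$ conclusions of Theorem 4.2 with smoothness $s-s'$'': if $sp=Q$ then $(s-s')p<Q$, so Theorem 4.2 applied to $(h_j)$ lands in its Case I and yields only an $L^{p'}$ bound, not exponential integrability; similarly, for $sp>Q$ the quantity $(s-s')p$ may fall on either side of $Q$. The paper does not reduce to Theorem 4.2 here; it carries the full chaining estimate \eqref{conclusion3} for $a_k$ (in terms of the weighted sum $\sum_{j\le l-2}2^{s'jp}g_j^p$) and then argues the exponential/$L^\infty$ bounds directly from that, as in Theorem \ref{embedding}. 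Your plan for these two cases would need to be rewritten accordingly.
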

\begin{proof}
First, we would like to establish the following inequality:
\begin{equation}\label{embed3'}
\inf_{c\in\mathbb{R}}\left(\dashint_{B_0}\vert u-c\vert^{p'}\,d\mu\right)^{\frac{1}{p'}}\leq C\left(\frac{\mu(2B_0)}{br_0^Q}\right)^{1/q}r_0^{s-s'}\left(\dashint_{2B_0}\Big(\sum_{j\leq l-2}2^{s'jp}g_j^p\Big)\,d\mu\right)^{\frac{1}{p}}.
\end{equation}
Once this is proved, one can interchange the summation and integration and use the H\"older inequality or inequality \eqref{inequality} to prove \eqref{embed3}.\\
Note that it is enough to prove \eqref{embed3'} with $(\dashint_{B_0}\vert u\vert ^{p'}\,d\mu)^{1/p'}$ on the left hand side. If we have $\sum_{j=-\infty}^{l-2}2^{s'jp}g_j^p=0$ a.e., then $g_j^p=2^{-s'jp}$ a.e. for all $j\leq l-2,$ and hence the theorem follows trivially. Thus we may assume that $\int_{2B_0}\sum_{j\leq l-2}2^{s'jp}g_j^p\,d\mu>0$. We may also assume that
\begin{equation}\label{lowerbound3}
\sum_{j=-\infty}^{l-2}2^{s'jp}g_j(x)^p\geq \frac{1}{2}\left(\dashint_{2B_0}\bigg(\sum_{j=-\infty}^{l-2}2^{s'jp}g_j(x)^p\bigg)\,d\mu\right)>0
\end{equation}
for all $x\in 2B_0.$ Let us define auxiliary sets
\begin{equation*}
E_k=\bigg\{x\in 2B_0:\Big(\sum_{j=-\infty}^{l-2}2^{s'jp}g_j(x)^p\Big)^{\frac{1}{p}}\leq 2^k\bigg\},\quad k\in\mathbb{Z}.
\end{equation*}
Clearly $E_k\subset E_{k+1}$ for all $k.$ Observe that
\begin{equation}\label{rhs3}
\int_{2B_0}\Big(\sum_{j\leq l-2} 2^{s'jp}g_j^p\Big)\,d\mu\approx\sum_{k=-\infty}^{\infty}2^{kp}\mu(E_k\setminus E_{k-1}).
\end{equation}
Let $a_k=\sup_{B_0\cap E_k}\vert u\vert.$ Obviously, $a_k\leq a_{k+1}$ and
\begin{equation}\label{lhs3}
\int_{B_0}\vert u\vert^{p'}\,d\mu\leq\sum_{k=-\infty}^{\infty}a_k^{p'}\mu(B_0\cap (E_k\setminus E_{k-1})).
\end{equation}
By Chebyschev's inequality, we get an upper bound of the complement of $E_k$
\begin{eqnarray}\label{Chebyschev3}
\mu(2B_0\setminus E_k) 
%&=& \mu\bigg(\Big\{x\in 2B_0:\Big(\sum_{j=-\infty}^{\infty}g_j(x)^q\Big)^{\frac{1}{q}}>2^k\Big\}\bigg) \nonumber \\
\leq 2^{-kp}\int_{2B_0}\Big(\sum_{j\leq l-2} 2^{s'jp}g_j^p\Big)\,d\mu.
\end{eqnarray}
Lower bound \eqref{lowerbound3} implies that $E_k=\emptyset$ for sufficiently small $k.$ On the other hand $\mu(E_k)\rightarrow \mu(2B_0)$ as $k\rightarrow\infty.$ Hence there is $\widetilde{k}_0\in\mathbb{Z}$ such that
\begin{equation}\label{conv3}
\mu(E_{\widetilde{k}_0-1})<\frac{\mu(2B_0)}{2}\leq\mu(E_{\widetilde{k}_0}).
\end{equation}
The inequality on the right hand side gives $E_{\widetilde{k}_0}\neq\emptyset$ and hence according to \eqref{lowerbound3}
\begin{equation}\label{first3}
2^{-\frac{1}{p}}\left(\dashint_{2B_0}\sum_{j=-\infty}^{l-2}2^{s'jp}g_j(x)^p\,d\mu\right)^{\frac{1}{p}}\leq \Big(\sum_{j=-\infty}^{l-2}2^{s'jp}g_j(x)^p\Big)^{\frac{1}{p}}\leq 2^{\widetilde{k}_0}
\end{equation}
for $x\in E_{\widetilde{k}_0}.$ At the same time the inequality on the left hand side of \eqref{conv3} together with \eqref{Chebyschev3} imply that
\begin{equation}\label{second3}
\frac{\mu(2B_0)}{2}<\mu(2B_0\setminus E_{\widetilde{k}_0-1})\leq 2^{-(\widetilde{k}_0-1)p}\int_{2B_0}\Big(\sum_{j\leq l-2} 2^{s'jp}g_j^p\Big)\,d\mu.
\end{equation}
Combining the inequalities \eqref{first} and \eqref{second} we obtain
\begin{equation}\label{combine3}
2^{-(1+\frac{1}{p})}\left(\dashint_{2B_0}\Big(\sum_{j\leq l-2} 2^{s'jp}g_j^p\Big)\,d\mu\right)^{\frac{1}{p}}\leq 2^{\widetilde{k}_0}\leq 2^{(1+\frac{1}{p})}\left(\dashint_{2B_0}\Big(\sum_{j\leq l-2} 2^{s'jp}g_j^p\Big)\,d\mu\right)^{\frac{1}{p}}.
\end{equation}
Choose the least integer $\ell\in\mathbb{Z}$ such that
\begin{equation}\label{ell3}
2^{\ell}>\max\bigg\{2^{1+1/p}\Big(\frac{2}{1-2^{-p/Q}}\Big)^{Q/p}, 1\bigg\}\left(\frac{\mu(2B_0)}{br_0^Q}\right)^{1/p}
\end{equation}
and set $k_0=\widetilde{k}_0+\ell.$ The reason behind such a choice of $\ell$ and $k_0$ will be understood later. Note that $\ell>0,$ by the lower bound of the measure $\mu,$ and hence \eqref{conv3} yields $\mu(E_{k_0})>0.$ The inequalities in \eqref{combine3} becomes
\begin{eqnarray}\label{final3}
2^{k_0} &\approx & (br_0^Q)^{-1/p}\left(\int_{2B_0}\Big(\sum_{j\leq l-2} 2^{s'jp}g_j^p\Big)\,d\mu\right)^{\frac{1}{p}}\\
&=& (br_0^Q)^{-1/p}\left(\sum_{j\leq l-2} 2^{s'jp}\int_{2B_0}g_j^p\,d\mu\right)^{\frac{1}{p}}.\label{final32}
\end{eqnarray}
Suppose that $\mu(B_0\setminus E_{k_0})>0$ (we will handle the other case at end of the proof). For $k\geq k_0+1,$ set
\begin{equation}\label{radii3}
t_k:=2b^{-1/Q}\mu(2B_0\setminus E_{k-1})^{1/Q}.
\end{equation}
Suppose now that $k\geq k_0+1$ is such that $\mu((E_k\setminus E_{k-1})\cap B_0)>0$ (if such a $k$ does not exist, then $\mu(B_0\setminus E_{k_0})=0,$ contradicting our assumption). Then in particular $t_k>0.$ Pick a point $x_k\in (E_k\setminus E_{k-1})\cap B_0$ and assume that $B(x_k,t_k)\subset 2B_0.$ Then
\begin{equation*}
\mu(B(x_k,t_k))\geq bt_k^Q>\mu(2B_0\setminus E_{k-1})
\end{equation*}
and hence $B(x_k,t_k)\cap E_{k-1}\neq\emptyset.$ Thus there is $x_{k-1}\in E_{k-1}$ such that
\begin{equation*}
d(x_k,x_{k-1})<t_k\leq 2b^{-1/Q}2^{-(k-1)p/Q}\left(\int_{2B_0}\Big(\sum_{j\leq l-2} 2^{s'jp}g_j^p\Big)\,d\mu\right)^{\frac{1}{Q}},
\end{equation*}
by \eqref{Chebyschev3} and \eqref{radii3}. Repeating this construction in a similar fashion we obtain for $k\geq k_0+1,$ a sequence of points
\begin{gather*}
x_k \in (E_k\setminus E_{k-1})\cap B_0,\\
x_{k-1} \in E_{k-1}\cap B(x_k,t_k),\\
\vdots  \\
%x_{k-i} \in  E_{k-i}\cap B(x_{k-(i-1)},r_{k-(i-1)}),\\
%\vdots  \\
x_{k_0}\in  E_{k_0}\cap B(x_{k_0+1},t_{k_0+1}), 
\end{gather*}
such that
\begin{equation}\label{distance3}
d(x_{k-i},x_{k-(i+1)})<t_{k-i}\leq 2b^{-1/Q}2^{-(k-(i+1))p/Q}\left(\int_{2B_0}\Big(\sum_{j\leq l-2} 2^{s'jp}g_j^p\Big)\,d\mu\right)^{\frac{1}{Q}},
\end{equation}
for every $i=0,1\ldots,k-k_0-1.$ Hence
\begin{eqnarray}\label{totaldistance3}
d(x_k,x_{k_0})&<&t_k+t_{k-1}+\cdots +t_{k_0+1}\\ \nonumber
&\leq & 2b^{-1/Q} \left(\int_{2B_0}\Big(\sum_{j\leq l-2} 2^{s'jp}g_j^p\Big)\,d\mu\right)^{\frac{1}{Q}}\sum_{n=k_0}^{k-1}2^{-np/Q}\\ \nonumber
&=& 2^{-k_0p/Q}\frac{2b^{-1/Q}}{1-2^{-p/Q}}\left(\int_{2B_0}\Big(\sum_{j\leq l-2} 2^{s'jp}g_j^p\Big)\,d\mu\right)^{\frac{1}{Q}}.
\end{eqnarray}
This is all true provided $B(x_{k-i},t_{k-i})\subset 2B_0$ for $i=0,1,2,\ldots,k-k_0-1.$ That means we require that the right hand side of \eqref{totaldistance} is $\leq r_0\leq\dist(B_0,X\setminus 2B_0).$ Our choice of $k_0,$ \eqref{combine3} and \eqref{ell3} guarantee us this requirement. Indeed,
\begin{eqnarray*}
2^{k_0}=2^{\widetilde{k}_0+\ell}&\geq & 2^{\ell}2^{-(1+1/p)}\bigg(\dashint_{2B_0}\Big(\sum_{j\leq l-2}2^{s'jp}g_j^p\Big)\,d\mu\bigg)^{1/p}\\
&\geq & \left(\frac{2}{1-2^{-p/Q}}\right)^{Q/p}(br_0^Q)^{-1/p}\bigg(\int_{2B_0}\Big(\sum_{j\leq l-2}2^{s'jp}g_j^p\Big)\,d\mu\bigg)^{1/p}.\\
\end{eqnarray*}
Then $t_k+t_{k-1}+\cdots +t_{k_0+1}\leq r_0\leq\dist(B_0,X\setminus 2B_0),$ which implies that $B(x_{k-i},t_{k-i})\subset 2B_0$ for all $i=0,1,2,\ldots,k-k_0-1.$\\
Now we would like to get some upper bound for $\vert u(x_k)\vert$ for $k\geq k_0+1.$ Towards this end, we write
\begin{equation}\label{difference3}
\vert u(x_k)\vert \leq\bigg(\sum_{i=0}^{k-k_0-1}\vert u(x_{k-i})-u(x_{k-(i+1)})\vert\bigg)+\vert u(x_{k_0})\vert
\end{equation}
Let us first consider the difference $\vert u(x_{k_0+1})-u(x_{k_0})\vert.$ The inequality \eqref{distance3} with $i=k-k_0-1$ gives
\begin{equation}\label{k_0_m_03}
d(x_{k_0+1},x_{k_0})< 2^{m_0-k_0p/Q},
\end{equation}
where $m_0\in\mathbb{Z}$ is such that
\begin{equation}\label{m_03}
2^{m_0-1}\leq 2b^{-1/Q}\left(\int_{2B_0}\Big(\sum_{j\leq l-2} 2^{s'jp}g_j^p\Big)\,d\mu\right)^{\frac{1}{Q}}<2^{m_0}.
\end{equation}
Now using \eqref{Hajlasz} and \eqref{k_0_m_03}, we get the following bound for the difference
\begin{equation*}
\vert u(x_{k_0+1})-u(x_{k_0})\vert\leq \sum_{j=-\infty}^{m_0-k_0p/Q}2^{js}\Big[g_j(x_{k_0+1})+g_j(x_{k_0})\Big].
\end{equation*}
%$\Big(\bigstar$ Is this step correct? Whenever $2^{j-1}\leq d(x_{k_0+1},x_{k_0})<2^j,$ I can find the bound for the difference and if I add those bounds it will still be a bound for the difference.$\Big)$\\
Similarly, we use the fact that $d(x_{k-i},x_{k-(i+1)})<2^{m_0-(k-(i+1))p/Q},$ and obtain
\begin{equation}
\vert u(x_{k-i})-u(x_{k-(i+1)})\vert\leq \sum_{j=-\infty}^{m_0-(k-i-1)p/Q}2^{js}\Big[g_j(x_{k-i})+g_j(x_{k-(i+1)})\Big],
\end{equation}
for all $i=0,1,2,\ldots,k-k_0-1.$ So, the inequality \eqref{difference3} becomes
\begin{equation*}
\vert u(x_k)\vert \leq\bigg(\sum_{i=0}^{k-k_0-1}\sum_{j=-\infty}^{m_0-(k-i-1)p/Q}2^{js}\Big[g_j(x_{k-i})+g_j(x_{k-i-1})\Big]\bigg)+\vert u(x_{k_0})\vert.
\end{equation*}
Use H\"older inequality  when $p>1$ and the inequality \eqref{inequality} when $p\leq 1$ and also use the facts that $x_{k-i-1}\in E_{k-i-1}\subset E_{k-i},$ $x_{k-i}\in E_{k-i}$ to obtain
\begin{align*}
\begin{split}
\vert u(x_k)\vert &\leq \sum_{i=0}^{k-k_0-1} 2^{m_0(s-s')-\frac{(k-i-1)p(s-s')}{Q}}\bigg(\sum_{j=-\infty}^{m_0-(k-i-1)p/Q}2^{js'p}\Big[g_j(x_{k-i})+g_j(x_{k-i-1})\Big]^p\bigg)^{1/p}\\
&\qquad+\vert u(x_{k_0})\vert\\
&\leq C2^{m_0(s-s')}\sum_{i=0}^{k-k_0-1}2^{-\frac{(k-i-1)p(s-s')}{Q}}2^{k-i}+\vert u(x_{k_0})\vert.
\end{split}
\end{align*}
Hence \eqref{distance3} with \eqref{m_03}, upon taking supremum over $x_k\in E_k\cap B_0,$ yields
\begin{equation*}
a_k\leq Cb^{-\frac{s-s'}{Q}}\bigg(\int_{2B_0}\Big(\sum_{j\leq l-2}2^{s'jp}g_j^p\Big)\,d\mu\bigg)^{\frac{s-s'}{Q}}\sum_{n=k_0}^{k-1}2^{n\left(1-(s-s')\frac{p}{Q}\right)}+\sup_{E_{k_0}\cap 2B_0}\vert u\vert.
\end{equation*}
To estimate the last term $\sup_{E_{k_0}\cap 2B_0}\vert u\vert,$ we can assume that $\essinf_{E_{k_0}\cap 2B_0}\vert u\vert=0,$ by the discussion in the beginning of the proof and the fact that $\mu(E_{k_0})>0.$ That means there is a sequence $y_i\in E_{k_0}$ such that $u(y_i)\rightarrow 0$ as $i\rightarrow\infty.$ Therefore, for $x\in E_{k_0}\cap 2B_0$ we have
\begin{equation}\label{lastterm3}
\vert u(x)\vert=\lim_{i\rightarrow\infty}\vert u(x)-u(y_i)\vert\leq C'r_0^{s-s'}2^{k_0}.
\end{equation}  
So, for $k>k_0$ we conclude that
\begin{equation}\label{conclusion3}
a_k\leq Cb^{-\frac{s-s'}{Q}}\bigg(\int_{2B_0}\Big(\sum_{j\leq l-2}2^{s'jp}g_j^p\Big)\,d\mu\bigg)^{\frac{s-s'}{Q}}\sum_{n=k_0}^{k-1}2^{n\left(1-(s-s')\frac{p}{Q}\right)}+C'r_0^{s-s'}2^{k_0}.
\end{equation}
For $k\leq k_0,$ we will use the estimate $a_k\leq a_{k_0}\leq C'r_0^{s-s'}2^{k_0}.$\\
\textbf{Case I:} $0<sp<Q.$ Therefore, for every $k\in\mathbb{Z},$ we have
\begin{equation*}
a_k\leq Cb^{-\frac{s-s'}{Q}}2^{k\left(1-(s-s')\frac{p}{Q}\right)}\bigg(\int_{2B_0}\Big(\sum_{j\leq l-2}2^{s'jp}g_j^p\Big)\,d\mu\bigg)^{\frac{s-s'}{Q}}+C'r_0^{s-s'}2^{k_0}.
\end{equation*}
Applying \eqref{lhs3}, \eqref{rhs3} and \eqref{final3} we get
\begin{align*}
\int_{B_0}\vert u\vert^{p'}\,d\mu &\leq \sum_{k=-\infty}^{\infty}a_k^{p'}\mu(B_0\cap (E_k\setminus E_{k-1}))\\
&\leq Cb^{-\frac{(s-s')p'}{Q}}\bigg(\int_{2B_0}\Big(\sum_{j\leq l-2}2^{s'jp}g_j^p\Big)\,d\mu\bigg)^{\frac{(s-s')p'}{Q}}\sum_{k=-\infty}^{\infty}2^{kp}\mu(E_k\setminus E_{k-1})\\
&\qquad+C'r_0^{(s-s')p'}2^{k_0p'}\mu(B_0)\\
&\leq C\left(1+\frac{\mu(B_0)}{br_0^Q}\right)b^{-\frac{(s-s')p'}{Q}}\bigg(\int_{2B_0}\Big(\sum_{j\leq l-2}2^{s'jp}g_j^p\Big)\,d\mu\bigg)^{\frac{p'}{p}}.
\end{align*}
Using the fact that $1+\mu(B_0)/br_0^Q\leq 2\mu(B_0)/br_0^Q,$ we get inequality \eqref{embed3'}.\\
Suppose now that $\mu(B_0\setminus E_{k_0})=0.$ In this case, we use the fact that $\int_{B_0}\vert u\vert^{p'}\,d\mu=\int_{E_{k_0}}\vert u\vert^{p'}\,d\mu$ and use inequality \eqref{lastterm3} to obtain inequality \eqref{embed3'}.\\
\textbf{Case II:} $sp=Q.$ Similar to the proof of Theorem \ref{embedding}, it is enough to prove, after using Jensen's inequality, the inequality \eqref{embed3b} with $\vert u-u_{B_0}\vert$ replaced by $\vert u\vert$ in the left hand side of it. It follows from \eqref{final32}, \eqref{lastterm3} and H\"older inequality (or the inequality \eqref{inequality}) that
\begin{equation}\label{zero3}
a_{k_0}\leq C'r_0^s2^{k_0}\leq C''b^{-1/p}\left(\sum_{j=-\infty}^{l-2}2^{s'jp}\left(\int_{2B_0}g_j^p\,d\mu\right)^{\frac{q}{p}}\right)^{\frac{1}{p}}.
\end{equation}
Hence from \eqref{conclusion3} and H\"older inequality (or the inequality \eqref{inequality}) we obtain, for $k>k_0,$
\begin{equation}\label{greaterzero3}
a_k\leq \tilde{C}b^{-1/p}\left(\sum_{j=-\infty}^{l-2}2^{s'jp}\left(\int_{2B_0}g_j^p\,d\mu\right)^{\frac{q}{p}}\right)^{\frac{1}{p}}(k-k_0).
\end{equation}
Again we split the integral into two parts: we estimate the integral over $B_0\cap E_{k_0}$ and $B_0\setminus E_{k_0}$ separately. For the first part, we have
\begin{eqnarray*}
\frac{1}{\mu(B_0)}\int_{B_0\cap E_{k_0}}\exp\left(\frac{C_1b^{1/Q}\vert u\vert}{M}\right)\,d\mu &\leq & \frac{\mu(B_0\cap E_{k_0})}{\mu(B_0)}\exp\left(\frac{C_1b^{1/Q}a_{k_0}}{M}\right)\\
&\leq &\exp(C_1C''),
\end{eqnarray*}
where the last inequality follows from \eqref{zero3}. The second part is estimated using inequality \eqref{greaterzero3} as follows
\begin{eqnarray*}
& & \frac{1}{\mu(B_0)}\int_{B_0\setminus E_{k_0}}\exp\left(\frac{C_1b^{1/Q}\vert u\vert}{M}\right)\,d\mu \\
&\leq & \frac{1}{\mu(B_0)}\sum_{k=k_0+1}^{\infty}\exp\left(\frac{C_1b^{1/Q}a_{k_0}}{M}\right)\mu(B_0\cap(E_k\setminus E_{k-1}))\\
&\leq & \frac{1}{\mu(B_0)}\sum_{k=k_0+1}^{\infty}\exp\left(C_1\tilde{C}(k-k_0)\right)\mu(E_k\setminus E_{k-1})\\
&\leq & \frac{2^{-k_0Q}}{\mu(B_0)}\sum_{k=-\infty}^{\infty}2^{kQ}\mu(E_k\setminus E_{k-1})\leq C_3,
\end{eqnarray*}
where we have chosen $C_1$ so that $\exp(C_1\tilde{C})=2^Q$ and also we have made use of the inequalities \eqref{rhs3}, \eqref{final3} and the measure density condition \eqref{measuredensity}.\\
\textbf{Case III:} $sp>Q.$ It follows from \eqref{conclusion3} and \eqref{final3}, for $k>k_0,$ that
\begin{eqnarray*}
a_k &\leq & Cb^{-\frac{s}{Q}}\bigg(\sum_{j=-\infty}^{l-2}2^{s'jp}\left(\int_{2B_0}g_j^p\,d\mu\right)^{\frac{q}{p}}\bigg)^{\frac{s}{Q}}\sum_{n=k_0}^{\infty}2^{n(1-\frac{sp}{Q})}+C'r_0^{s}2^{k_0}\\
&\leq & C\left(\frac{\mu(2B_0)}{br_0^Q}\right)^{1/p}r_0^{s}\left(\sum_{j=-\infty}^{l-2}2^{s'jp}\left(\dashint_{2B_0}g_j^p\,d\mu\right)^{\frac{q}{p}}\right)^{\frac{1}{p}}.
\end{eqnarray*}
For $k\leq k_0,$ we have 
\begin{equation*}
a_k\leq a_{k_0}\leq C'r_0^s2^{k_0}\leq C\left(\frac{\mu(2B_0)}{br_0^Q}\right)^{1/p}r_0^{s}\left(\sum_{j=-\infty}^{l-2}2^{s'jp}\left(\dashint_{2B_0}g_j^p\,d\mu\right)^{\frac{q}{p}}\right)^{\frac{1}{p}}.
\end{equation*}
Therefore
\begin{equation*}
\Vert u-u_{B_0}\Vert_{L^{\infty}(B_0)}\leq 2\Vert u\Vert_{L^{\infty}(B_0)}\leq C\left(\frac{\mu(2B_0)}{br_0^Q}\right)^{1/p}r_0^{s}\left(\sum_{j=-\infty}^{l-2}2^{s'jp}\left(\dashint_{2B_0}g_j^p\,d\mu\right)^{\frac{q}{p}}\right)^{\frac{1}{p}}.
\end{equation*}
\end{proof}
\section{Measure density from embedding}
The next theorem shows that, if the space $X$ is $Q$-regular and geodesic, then the measure density condition of a domain $\Omega$ is a necessary condition for the embeddings of both $M^s_{p,q}(\Omega)$ and $N^s_{p,q}(\Omega).$ The proof is inspired by the proof of Theorem 6.1 of \cite{HIT16}, where the measure density condition was derived from extension domains for these spaces.
\begin{theorem}
Let $X$ be a $Q$-regular, geodesic metric measure space and let $\Omega\subset X$ be a domain. Let $0<s<1,$ $0<p<\infty$ and $0<q\leq\infty.$\\
$(i)$ When $sp<Q,$ if there exists a constant $C$ such that for all $f\in M^s_{p,q}(\Omega),$ we have $\Vert f\Vert_{L^{p^*}(\Omega)}\leq C\Vert f\Vert_{M^s_{p,q}(\Omega)},$ where $p^*=\frac{Qp}{Q-sp},$ then $\Omega$ satisfies \eqref{measuredensity}.\\
$(ii)$ When $sp=Q,$ if there exist constants $C_1, C_2$ such that for all $f\in M^s_{p,q}(\Omega)$ and for all balls $B,$ we have
\begin{equation*}
\int_{B\cap\Omega}\exp\left(C_1\frac{\vert u-u_{B(x,r)}\vert}{\Vert f\Vert_{M^s{p,q}(\Omega)}}\right)\,d\mu\leq C_2\mu(B),
\end{equation*}
then $\Omega$ satisfies \eqref{measuredensity}.\\
$(iii)$ When $sp>Q,$ if there exists a constant $C_3$ such that for all $f\in M^s_{p,q}(\Omega),$ and for every $x,y\in\Omega,$ we have $\vert f(x)-f(y)\vert\leq C_3\Vert f\Vert_{M^s_{p,q}(\Omega)}d(x,y)^{s-Q/p},$ then $\Omega$ satisfies \eqref{measuredensity}.
The claims also hold with $M^s_{p,q}(\Omega)$ replaced by $N^s_{p,q}(\Omega).$
\end{theorem}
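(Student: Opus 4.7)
Fix $x_0 \in \Omega$ and $r \in (0,1]$, and write $V_r := \mu(B(x_0,r) \cap \Omega)$; by $Q$-regularity, $V_r \leq c_Q r^Q$, so the task is the matching lower bound. The plan is to test each of the three embeddings against the single cone
\begin{equation*}
f(y) = \min\Big\{1,\ \tfrac{8}{r}\bigl(\tfrac{r}{4} - d(y,x_0)\bigr)_+\Big\},
\end{equation*}
which equals $1$ on $B(x_0,r/8)\cap\Omega$, is supported in $B(x_0,r/4)\cap\Omega$, is bounded by $1$, and is $(8/r)$-Lipschitz. I build a fractional $s$-gradient $(g_k)_{k\in\Z}$ by splitting at $2^k \approx r/4$: for $2^k \leq r/4$ set $g_k = C\,2^{k(1-s)}r^{-1}\chi_{B(x_0,r/4+2^k)\cap\Omega}$ (justified by the Lipschitz bound), and for $2^k > r/4$ set $g_k(y) = 2^{s-ks} f(y)$ (justified by $|f(y)-f(z)|\leq f(y)+f(z)$). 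Inequality \eqref{Hajlasz} is checked directly, and since every $g_k$ is supported in $B(x_0,r/2)\cap\Omega \subset B(x_0,r)\cap\Omega$, two geometric series in $k$ give
\begin{equation*}
\|f\|_{M^s_{p,q}(\Omega)} \lesssim r^{-s} V_r^{1/p},
\end{equation*}
with the same bound for $\|f\|_{N^s_{p,q}(\Omega)}$ after interchanging the order of the norms.

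\textbf{Applying the three embeddings.} For (iii), the geodesic hypothesis produces $y_0 \in \Omega$ with $d(x_0,y_0) \in [r/8, r/4]$ whenever $\diam(\Omega) \geq r/4$ (the opposite case is a degenerate one, handled separately); then $|f(x_0)-f(y_0)|=1$, so the H\"older estimate forces $\|f\|_{M^s_{p,q}(\Omega)} \gtrsim r^{Q/p-s}$, and combined with the norm bound this yields $V_r \gtrsim r^Q$ at once. For (ii) with $sp = Q$, the smallness of $V_r$ makes $f_{B(x_0,r)}$ negligible, so $|f - f_{B(x_0,r)}| \gtrsim 1$ on $B(x_0,r/8)\cap\Omega$; substituting into the exponential inequality and using $sp=Q$ transforms it into $\varepsilon \exp(c\,\varepsilon^{-1/p}) \lesssim 1$ for $\varepsilon := V_r/r^Q$, which forces $\varepsilon$ uniformly bounded below. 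For (i), $\|f\|_{L^{p^*}(\Omega)} \geq V_{r/8}^{1/p^*}$, so the embedding yields the scale-relating inequality $V_{r/8}^{1/p^*} \leq C r^{-s} V_r^{1/p}$ which, via the identity $p^*(Q/p-s)=Q$ and the substitution $\phi_n := V_{r/8^n}/(r/8^n)^Q$, reduces to the recursion $\phi_{n+1} \leq K \phi_n^{p^*/p}$ with $K$ universal. Since $p^*/p > 1$, iterating shows that $\phi_0$ below a fixed threshold forces $\phi_n \to 0$ super-exponentially, contradicting $\phi_n \gtrsim c_Q^{-1}$ for $n$ large enough that $B(x_0,r/8^n) \subset \Omega$ (where $Q$-regularity provides $\mu(B(x_0,r/8^n)) \approx (r/8^n)^Q$). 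Thus $\phi_0 \gtrsim 1$, i.e., $V_r \gtrsim r^Q$.

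\textbf{Main obstacle.} The hardest case is (i): the embedding alone does not pin down $V_r$, only coupling $V_r$ with $V_{r/8}$, so an iteration is essential. The delicate point is to verify that the threshold on $\phi_0$ obtained from the recursion depends only on the embedding constant, on $Q$, and on $c_Q$, not on $x_0$ or on $\dist(x_0, X\setminus\Omega)$; the latter determines only how many iteration steps are needed before the contradiction is reached. The Besov claims follow from the same argument after replacing $L^p(l^q)$ with $l^q(L^p)$ throughout Step 2.
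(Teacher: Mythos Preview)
Your overall strategy---a fixed-ratio cone plus an iteration across dyadic scales---is different from the paper's, which instead places the transition region of the bump between the \emph{halving radii} $\tilde r,\tilde{\tilde r}$ determined by $\mu(B(x_0,\tilde r)\cap\Omega)=\tfrac12\mu(B(x_0,r)\cap\Omega)$ and then telescopes $r=\sum_j(r_j-r_{j+1})$. For case~(i) your super-linear recursion $\phi_{n+1}\le K\phi_n^{p^*/p}$ is correct and gives the conclusion; the paper's telescoping route is a bit more direct (it produces $r-\tilde r\lesssim V_r^{1/Q}$ in one stroke and avoids a contradiction argument), but both are valid. Case~(iii) is essentially the same in both proofs; just note that to get $|f(x_0)-f(y_0)|=1$ you need $d(x_0,y_0)\ge r/4$ so that $f(y_0)=0$, not $d(x_0,y_0)\in[r/8,r/4]$.

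Case~(ii), however, has a real gap. The claim that ``the smallness of $V_r$ makes $f_{B(x_0,r)}$ negligible'' is not justified: since $f$ lives on $\Omega$, the relevant average is over $B(x_0,r)\cap\Omega$, and one only has $f_{B(x_0,r)\cap\Omega}\le V_{r/4}/V_r$, a ratio not controlled by $\varepsilon=V_r/r^Q$. More importantly, even granting $|f-f_B|\gtrsim 1$ on $B(x_0,r/8)\cap\Omega$, the Trudinger hypothesis gives
\[
V_{r/8}\,\exp\bigl(c\,\varepsilon^{-1/p}\bigr)\ \lesssim\ r^Q,
\]
which couples $V_{r/8}$ to $\varepsilon$, not $V_r$ to $\varepsilon$; your asserted inequality $\varepsilon\exp(c\,\varepsilon^{-1/p})\lesssim 1$ would follow only if $V_{r/8}\gtrsim V_r$, which is precisely the measure density being proved. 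The paper circumvents this via the halving radius: with the bump transitioning between $\tilde{\tilde r}$ and $\tilde r$, the sets $\{u=1\}$ and $\{u=0\}$ inside $B(x_0,r)\cap\Omega$ \emph{both} have measure comparable to $V_r$ by construction, so whichever side of $1/2$ the average falls on, one integrates over a set of measure $\approx V_r$; an iteration in the halving radii (with a logarithmic correction) then finishes. You can also repair (ii) within your framework by turning the displayed two-scale inequality into a recursion $\varepsilon_{n+1}\lesssim\exp(-c\,\varepsilon_n^{-1/p})$ and arguing as in your case~(i), but the one-step argument you sketched does not close.
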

\begin{proof}
To show that the measure density condition holds, let $x\in \Omega$ and $0<r\leq 1$ and let $B=B(x,r).$ We may assume that $\Omega\setminus B(x,r)\neq\emptyset,$ otherwise the measure density condition is obviously satisfied. We split the proof into three different cases depending on the size of $sp.$\\
\indent\textbf{Case 1:} $0<sp<Q.$ By the proof of \cite[Proposition 13]{HKT08}, the geodesity of $X$ implies that $\mu(\partial B(x,R))=0$ for every $R>0.$ Hence there exist a unique $0<\tilde{r}<r$ such that
%radii $0<\tilde{\tilde{r}}<\tilde{r}<r$ such that
\begin{equation*}
\mu(B(x,\tilde{r})\cap\Omega)=\frac{1}{2}\mu(B(x,r)\cap\Omega).
\end{equation*}
%=\frac{1}{2}\mu(B(x,\tilde{r})\cap\Omega)
Define $u:\Omega\rightarrow [0,1]$ by
\begin{equation}\label{bump}
u(y)=
 \begin{cases}
  1& \text{if $y\in B(x,\tilde{r})\cap\Omega$},\\
  \frac{r-d(x,y)}{r-\tilde{r}} & \text{if $y\in B(x,r)\setminus B(x,\tilde{r})\cap\Omega$},\\
  0& \text{if $y\in \Omega\setminus B(x,r)$}.
 \end{cases}
\end{equation}
Note that
\begin{equation*}
\Vert u\Vert_{L^{p^*}(\Omega)}\geq\mu(B(x,\tilde{r})\cap\Omega)^{1/p^*}.
\end{equation*}
%Toni, Liza and Heli have shown that
Since the function $u$ is $1/(r-\tilde{r})$-Lipschitz and $\Vert u\Vert_{\infty}\leq 1,$ by \cite[Corollary 3.12]{HIT16} and the fact that $0<r-\tilde{r}<1,$ we have
\begin{equation}\label{corolarry3.12}
\Vert u\Vert_{M^s_{p,q}(\Omega)}\leq C\mu(B(x,r)\cap\Omega)^{1/p}(r-\tilde{r})^{-s}.
\end{equation}
Since $\Vert u\Vert_{L^{p^*}(\Omega)}\leq \Vert u\Vert_{M^s_{p,q}(\Omega)}$ by our assumption, we further have
\begin{equation*}
\mu(B(x,\tilde{r})\cap\Omega)^{1/p^*}\lesssim \mu(B(x,r)\cap\Omega)^{1/p}(r-\tilde{r})^{-s},
\end{equation*}
which yields $r-\tilde{r}\lesssim\mu(B(x,r)\cap\Omega)^{1/Q}.$ Now let us define a sequence $r_0>r_1>r_2>\cdots>0$ by induction:
$$r_0=r, \qquad\text{and}\qquad r_{j+1}=\tilde{r_j}.$$
 Clearly
\begin{equation}\label{decreasing}
\mu(B(x,r_j)\cap\Omega)=2^{-j}\mu(B(x,r)\cap\Omega).
\end{equation}
Therefore $r_j\rightarrow 0$ as $j\rightarrow\infty,$ and hence
\begin{eqnarray*}
r &=&\sum_{j=0}^{\infty}(r_j-r_{j+1})\\
%&\lesssim & \sum_{j\in\mathbb{N}}\mu(B(x,t_{j-1}r)\cap\Omega)^{1/Q}\\
&\lesssim & \sum_{j=0}^{\infty}2^{-j/Q}\mu(B(x,r)\cap\Omega)^{1/Q}\\
&\leq & \mu(B(x,r)\cap\Omega)^{1/Q}
\end{eqnarray*}
as desired.\\
%Measure density of $\Omega$ then follows by the usual argument.
\indent\textbf{Case 2:} $sp=Q.$ Again for $x\in\Omega$ and $0<r\leq 1,$ we will have $0<\tilde{\tilde{r}}<\tilde{r}<r$ such that
\begin{equation}\label{rtilde}
\mu(B(x,\tilde{\tilde{r}})\cap\Omega)=\frac{1}{2}\mu(B(x,\tilde{r})\cap\Omega)=\frac{1}{4}\mu(B(x,r)\cap\Omega).
\end{equation}
%Let $t_0=1$ and $t_j\in (0,1)$ for $j\in\mathbb{N}$ such that \eqref{decreasing} holds.
Considering the function $u$ associated to $x,\tilde{r},\tilde{\tilde{r}}$ as in \eqref{bump} and using (the proof of) \cite[Corollary 3.12]{HIT16} we have
\begin{equation*}
\int_{B(x,r)\cap\Omega}\exp\left(C_1\frac{\vert u-u_{B(x,r)}\vert(\tilde{r}-\tilde{\tilde{r}})^s}{\mu(B(x,\tilde{r})\cap\Omega)}\right)\,d\mu\leq C_2r^Q.
\end{equation*}
Since $u=v=1$ on $B(x,\tilde{\tilde{r}})\cap\Omega$ and $u=v=0$ on $(B(x,r)\setminus B(x,\tilde{r}))\cap\Omega,$ we have that $\vert u-u_{B(x,r)}\vert\geq 1/2$ on at least one of the sets $B(x,\tilde{\tilde{r}})\Omega$ and $(B(x,r)\setminus B(x,\tilde{r}))\cap\Omega.$ Since the measure of these two sets are comparable to the measure of $B(x,\tilde{r})\cap\Omega,$ we have
\begin{equation*}
\mu(B(x,\tilde{r})\cap\Omega)\exp\big(C_1(\tilde{r}-\tilde{\tilde{r}})^s\mu(B(x,\tilde{r})\cap\Omega)\big)\leq C_2r^Q,
\end{equation*} 
which can be written in the form
\begin{equation}\label{maininequality}
\tilde{r}-\tilde{\tilde{r}}\leq C_1\mu(B(x,\tilde{r})\cap\Omega)^{1/Q}\left[\log\left(\frac{C_2r^Q}{\mu(B(x,\tilde{r})\cap\Omega)}\right)\right]^{1/s}.
\end{equation} 
%\begin{equation*}
%\dashint_{B(x,r)}\exp\left(C_1\frac{\vert u-u_{B(x,r)}\vert}{\Vert \overrightarrow{g}\Vert_{L^p(\Omega,l^q)}}\right)\,d\mu\leq C_2.
%\end{equation*}
Now let us state a lemma from \cite{HKT08b}, which will help us to relax the range of $0<r\leq 1$ to $0<r\leq 10\tilde{r}.$
\begin{lemma}
If the measure density condition \eqref{measuredensity} holds for all $x\in\Omega$ and all $r\leq 1$ such that $r\leq 10\tilde{r},$ where $\tilde{r}$ is defined by \eqref{rtilde}, then \eqref{measuredensity} holds fro all $x\in\Omega$ and all $r\leq 1.$
\end{lemma}
\noindent Now let us define a sequence by setting
$$r_0=r, \qquad \text{and}\qquad r_{j+1}=\tilde{r_j}.$$
Inequality \eqref{maininequality} together with the fact that
\begin{equation*}
\mu(B(x,r_{j+1})\cap\Omega)=2^{-j}\mu(B(x,\tilde{r})\cap\Omega)
\end{equation*}
gives
\begin{eqnarray*}
\tilde{r}=\sum_{j=0}^{\infty}(r_j-r_{j+1}) &\leq & \sum_{j=0}^{\infty}C_1\mu(B(x,r_j)\cap\Omega)^{1/Q}\left[\log\left(\frac{C_2r^Q}{\mu(B(x,r_j)\cap\Omega)}\right)\right]^{1/s}\\
&\leq & C_1\mu(B(x,\tilde{r})\cap\Omega)^{1/Q}\sum_{j\in\mathbb{N}}2^{-j/Q}\left[\log\left(\frac{C_22^jr^Q}{\mu(B(x,\tilde{r})\cap\Omega)}\right)\right]^{1/s}.
\end{eqnarray*} 
The sum on the right-hand side is bounded from above (up to a constant) by
\begin{equation*}
\sum_{j=0}^{\infty}2^{-j/Q}j^{1/s}(\log 2)^{1/s}+\left(\sum_{j=0}^{\infty}2^{-j/Q}\right)\left[\log\left(\frac{C_2r^Q}{\mu(B(x,\tilde{r})\cap\Omega)}\right)\right]^{1/s}.
\end{equation*}
The two sums in the above expression converge to some constants depending on $Q$ and $s$ only and hence we obtain
\begin{equation}\label{sumoftwo}
\tilde{r}\leq C\mu(B(x,\tilde{r})\cap\Omega)^{1/Q}\left[1+\left[\log\left(\frac{C_2r^Q}{\mu(B(x,\tilde{r})\cap\Omega)}\right)\right]^{1/s}\right].
\end{equation}
Let us write $\mu(B(x,\tilde{r})\cap\Omega)=\epsilon\tilde{r}^Q.$ Since
\begin{equation*}
\mu(B(x,r)\cap\Omega)=2\mu(B(x,\tilde{r})\cap\Omega)=2\epsilon\tilde{r}^Q\geq 2\cdot 10^{-Q}\epsilon r^Q,
\end{equation*}
it suffices to prove that $\epsilon$ is bounded from below by some positive constant. Now, from inequality \eqref{sumoftwo}, we have
\begin{equation*}
C\epsilon^{1/Q}(1+\log(C_210^Q\epsilon^{-1}))^{1/s}\geq 1.
\end{equation*}
The expression on the left-hand side converges to $0$ if $\epsilon\rightarrow 0,$ and hence $\epsilon$ must be bounded from below by a positive constant.\\
\indent\textbf{Case 3:} $sp>Q.$ For $x\in\Omega$ and $r\in (0,1],$ take $\tilde{r}\in (0,r/4$ and for such $x,r,\tilde{r}$ set $u$ as in \eqref{bump}. Then for all $y,z\in \Omega,$ by our assumption together with \eqref{corolarry3.12}, we have
\begin{equation*}
\vert u(y)-u(z)\vert\leq C\Vert u\Vert_{M^s_{p,q}(\Omega)}d(y,z)^{s-Q/p}\lesssim \frac{\mu(B(x,r)\cap\Omega)^{1/p}}{r^s}d(x,y)^{s-Q/p}.
\end{equation*}
In particular, let $y\in B(x,r)\cap\Omega$ and $z\in (B(x,r+r/2)\cap\Omega)\setminus B(x,r).$ Then $d(y,x)\leq r/4,$ $r\leq d(z,x)\leq 3r/2$ and hence $r/2\leq d(y,z)\leq 2r.$ Therefore, $\mu(B(x,r)\cap\Omega)\gtrsim r^Q.$ This ends the proof of the theorem.
\end{proof}
\begin{remark}
Note that in the previous theorem, we have restricted $s$ to be strictly less than one. For $s=1,$ we refer to a recent result of G\'orka for Haj\l asz-Sobolev space, \cite{Gor17}.
\end{remark}
\def\bibname{References}
\bibliography{embedding}
\bibliographystyle{alpha}
\end{document}